\documentclass[a4paper,12pt]{article}
\usepackage{amsmath,amsthm,amsfonts,amssymb,graphicx}
\usepackage{xspace}
\usepackage[ruled]{algorithm}
\usepackage{algorithmic}
\numberwithin{algorithm}{section}
\newif\ifpreprint
\preprinttrue
\newcommand{\R}{\mathbb{R}}
\newcommand{\B}{\mathcal{B}}
\newcommand{\Mass}{\boldsymbol{M}}
\newcommand{\Stiff}{\boldsymbol{S}}
\newcommand{\loadf}{\boldsymbol{f}}
\newcommand{\bilinA}{\textsf{A}}
\newcommand{\iprod}[1]{\langle#1\rangle}

\newcommand{\Hdot}{\dot H}
\newcommand{\nrml}{\textnormal{\textsf{n}}}
\newcommand{\Cluster}{\textnormal{\textsf{C}}}
\newcommand{\Mluster}{\textnormal{\textsf{M}}}
\newcommand{\Tree}{\mathcal{T}}
\newcommand{\Leaf}{\textnormal{\textsf{L}}}
\newcommand{\Cover}{\mathcal{C}}
\newcommand{\MinC}{\mathcal{M}}
\newcommand{\History}{\operatorname{History}}
\newcommand{\Near}{\operatorname{Near}}
\newcommand{\Far}{\operatorname{Far}}
\newcommand{\Dist}{\operatorname{Dist}}
\newcommand{\Bbar}{\bar\B}
\newcommand{\expmone}{\texttt{expm1}}
\newcommand{\logonep}{\texttt{log1p}}
\newcommand{\smid}{\bar s}
\newcommand{\Utilde}{\tilde U}
\newcommand{\Btilde}{\widetilde \B}
\newcommand{\betatilde}{\tilde\beta}
\newcommand{\Children}{\operatorname{Children}}
\newcommand{\divideproc}{\textsl{divide}}
\newcommand{\free}{\textsl{free}}
\newcommand{\ANDAND}{\textbf{and}\xspace}
\newcommand{\Sigmatilde}{\widetilde\Sigma}
\newcommand{\vecU}{\boldsymbol{U}}

\newcommand{\Leaves}{\mathcal{L}}

\newcommand{\G}{\mathcal{G}}
\newcommand{\Parent}{\operatorname{Parent}}
\newcommand{\Gen}{\operatorname{Gen}}
\newcommand{\Len}{\operatorname{Len}}
\newcommand{\kmin}{k_{\min}}
\newcommand{\nmin}{n_{\min}}
\newcommand{\nmax}{n_{\max}}
\newtheorem{theorem}{Theorem}[section]
\newtheorem{corollary}[theorem]{Corollary}
\newtheorem{definition}[theorem]{Definition}
\newtheorem{lemma}[theorem]{Lemma}
\newtheorem{proposition}[theorem]{Proposition}
\title{Fast summation by interval clustering for an evolution 
equation with memory}
\author{William McLean}
\date{\today}
\begin{document}
\maketitle
\begin{abstract}
We solve a fractional diffusion equation using a piecewise-constant,
discontinuous Galerkin method in time combined with a continuous,
piecewise-linear finite element method in space.  If there are $N$~time
levels and $M$~spatial degrees of freedom, then a direct implementation
of this method requires $O(N^2M)$~operations and $O(NM)$~storage, owing to
the presence of a memory term: at each time step, the discrete evolution
equation involves a sum over \emph{all} previous time levels.  We show 
how the computational cost can be reduced to $O(MN\log N)$~operations 
and $O(M\log N)$~active memory locations.  
\end{abstract}
\section{Introduction}
The density~$u=u(x,t)$ of particles undergoing anomalous subdiffusion 
satisfies the integrodifferential equation~\cite{MetzlerKlafter2000}
\begin{equation}\label{eq: integrode}
\frac{\partial u}{\partial t}(x,t)
	-\nabla\cdot\biggl(\frac{\partial}{\partial t}
	\int_0^t\frac{(t-s)^{\nu-1}}{\Gamma(\nu)}\,K\nabla u(x,s)\,ds\biggr)
	=f(x,t)
\end{equation}
for a parameter~$\nu$ in the range $0<\nu<1$, where $K>0$ is a generalized
diffusivity and $f$ is a homogeneous term.  We 
consider~\eqref{eq: integrode} for~$0<t<T$ and for~$x$ in a bounded, 
convex~or $C^2$ domain~$\Omega\subseteq\R^d$, subject to 
homogeneous boundary conditions, either of Dirichlet type,
\begin{equation}\label{eq: Dirichlet}
u(x,t)=0\quad\text{for $x\in\partial\Omega$,}
\end{equation}
or of Neumann type,
\begin{equation}\label{eq: Neumann}
\frac{\partial u}{\partial\nrml}(x,t)=0\quad\text{for $x\in\partial\Omega$,}
\end{equation}
where $\nrml$ denotes the outward unit normal for~$\Omega$.  In addition,
we specify the initial condition
\[
u(x,0)=u_0(x)\quad\text{for $x\in\Omega$.}
\]
In the limit as~$\nu\to1$, the evolution equation~\eqref{eq: integrode}
reduces to the classical diffusion equation, $u_t-\nabla\cdot(K\nabla u)=f$,
in which the flux~$-K\nabla u$ depends only on the instantaneous value
of the gradient.  By contrast, in~\eqref{eq: integrode} the flux 
depends on the entire history of the gradient, and this fact leads to 
significant computational challenges, particularly if the spatial 
dimension~$d=3$.

In~\cite{McLeanMustapha2009}, we solved the foregoing initial-boundary 
value problem using a piecewise-constant, discontinuous Galerkin (DG) 
method for the time discretization, combined with a standard, continuous
piecewise-linear finite element discretization in space.
For simplicity, in the present work
we assume that both the spatial mesh and the time steps are quasiuniform.
Defining the elliptic partial differential 
operator~$Au=-\nabla\cdot(K\nabla u)$, we assume that for some
$\sigma\in(0,1]$ the 
solution~$u$ satisfies regularity estimates of the form~\cite{McLean2010} 
\begin{equation}\label{eq: regularity}
t^\nu\|Au'(t)\|+t^{\nu+1}\|Au''(t)\|\le Ct^{\sigma-1}
\quad\text{and}\quad
\|Au(t)\|+t\|Au'(t)\|\le C
\end{equation}
for $0<t\le T$.  The DG solution~$U(x,t)$ therefore satisfies an error 
bound~\cite[Theorem~3]{McLeanMustapha2009} in the norm of~$L_2(\Omega)$,
\begin{equation}\label{eq: DG error}
\|U^n-u(t_n)\|\le C(k^\sigma+h^2)\lg(t_n/t_1)
\quad\text{for $1\le m\le M$ and $1\le n\le N$,}
\end{equation}
where $t_n$ is the $n$th time level, $U^n(x)=U(x,t_n^-)$, 
$k$ is the maximum time step, $h$ 
is the maximum diameter of the spatial finite elements and
$\lg(s)=\max(1,|\log s|)$.  When $\sigma<1$, we could achieve full 
first-order accuracy with respect to~$k$ (ignoring logarithmic factors) 
by relaxing the assumption that the time steps are quasi-uniform, but to 
do so would complicate our fast solution procedure.  

Section~\ref{sec: DG method} provides a precise description of the DG 
method, which can be interpreted as a type of implicit Euler scheme.
Denote the $m$th \emph{free} node by~$x_m$, and let $U^n_m=U(x_m,t_n^-)$.
At the $n$th time step, we compute the vector of nodal values, 
$\vecU^n=[U^n_m]\in\R^M$, by solving a linear system 
\begin{equation}\label{eq: linear system}
(\Mass+\beta_{nn}\Stiff)\vecU^n=\Mass\vecU^{n-1}+k_n\bar\loadf^n
	+\Stiff\sum_{j=1}^{n-1}\beta_{nj}\vecU^j.
\end{equation}
Here, $\Mass$~and $\Stiff$ are the mass and stiffness matrices arising
from the spatial discretization, $k_n=t_n-t_{n-1}$ is the length of the
$n$th time interval, $\bar f^n$ is the average value of 
the load vector for~$t_{n-1}<t<t_n$, and the weights are given by
\begin{equation}\label{eq: beta_nn}
\beta_{nn}=\int_{t_{n-1}}^{t_n}\frac{(t_n-s)^{\nu-1}}{\Gamma(\nu)}\,ds
	=\frac{k_n^\nu}{\Gamma(1+\nu)}
\end{equation}
and
\begin{equation}\label{eq: beta_nj}
\beta_{nj}=\int_{t_{j-1}}^{t_j}\biggl(\frac{(t_{n-1}-s)^{\nu-1}}{\Gamma(\nu)}
	-\frac{(t_n-s)^{\nu-1}}{\Gamma(\nu)}\biggr)\,ds>0.
\end{equation}
The condition number of the matrix~$\Mass+\beta_{nn}\Stiff$ is 
$O(1+k^\nu h^{-2})$, 
and we assume the use of an efficient elliptic solver costing $O(M)$~operations.
By comparison, computing the right-hand side in the obvious way requires
$O(nM)$~operations.
Moreover, we must keep the vector~$U^j$ in active memory for \emph{all} the 
previous time levels $j=1$, $2$, \dots, $n-1$,
requiring $O(nM)$~locations.  Thus, the total cost for $N$~time steps 
is $O(N^2M)$~operations and $O(NM)$~active memory locations, whereas 
applying the same DG method to a \emph{classical} diffusion equation 
(which has no memory term) costs only $O(NM)$~operations and 
$O(M)$~active locations.  In other words, solving the fractional 
diffusion equation in this way costs $N$~times as 
much as solving a classical diffusion equation.

Cuesta, Lubich and Palencia~\cite{CuestaLubichPalencia2006} studied 
the time discretization of~\eqref{eq: integrode} by convolution quadrature,
and Sch\"adle, L\'opez-Fern\'andez and 
Lubich~\cite{SchaedleLopezFernandezLubich2006,LopezFernandezLubichSchaedle2008} 
developed a fast solution algorithm costing $O(MN\log N)$~operations
and using $O(M\log N)$~active memory locations.  
The purpose of this paper is to present a fast summation algorithm for
the DG method~\eqref{eq: linear system} that likewise costs 
$O(MN\log N)$~operations and $O(M\log N)$~active memory locations.

The algorithm is closely related to the panel clustering technique for
boundary element methods, introduced by Hackbusch and 
Nowak~\cite{HackbuschNowak1989}.  To explain the basic strategy, suppose that
instead of $(t-s)^{\nu-1}/\Gamma(\nu)$ the integral term had a degenerate 
kernel~$\sum_{p=1}^r a_p(t)b_p(s)$.  In this case,
\begin{equation}\label{eq: low rank}
\beta_{nj}=\sum_{p=1}^r\phi_{pn}\psi_{pj}
	\quad\text{for $1\le j\le n\le N$,}
\end{equation}
where $\phi_{np}=a_p(t_n)-a_p(t_{n-1})$~and 
$\psi_{nj}=\int_{t_{j-1}}^{t_j}b_p(s)\,ds$.  Since
\[
\sum_{j=1}^{n-1}\beta_{nj}\vecU^j=\sum_{p=1}^r\phi_{pn}
	\Psi^{n-1}_p(\vecU)
	\quad\text{where}\quad\Psi^{n-1}_p(\vecU)
	=\sum_{j=1}^{n-1}\psi_{pj}\vecU^j,
\]
and since $\Psi^n_p(\vecU)=\Psi^{n-1}_p(\vecU)+\psi_{pn}\vecU^n$, 
evaluating the right-hand side of the linear system~\eqref{eq: linear system}
would cost only $O(rM)$~operations and there would be no need to retain
in active memory the solution at all previous time levels.  Our 
kernel~$(t-s)^{\nu-1}/\Gamma(\nu)$ is not degenerate,
but it can be approximated to high accuracy by a degenerate kernel if we
restrict $t$~and $s$ to suitable, well-separated intervals.
Consequently, if $t_n$~and $t_j$ are restricted in the same way,
then $\beta_{nj}$ can be approximated to high accuracy by a 
sum~$\betatilde_{nj}$ of the form~\eqref{eq: low rank}, leading to a 
fast method to evaluate the sum that occurs on the right-hand side 
of~\eqref{eq: linear system}.

In Section~\ref{sec: DG method} we investigate the effect of perturbing 
the DG method in this way, and Section~\ref{sec: Low-rank} presents a
simple scheme for generating the $\betatilde_{nj}$ via Taylor expansion.
Section~\ref{sec: cluster} describes the \emph{cluster tree}, whose nodes are
contiguous families of time intervals, used to appropriately restrict 
$t_n$~and $t_j$.  The fast summation algorithm is
then defined via the concept of an \emph{admissible covering}, and we
present an error estimate in Theorem~\ref{thm: error}.  Further 
investigation of the cluster tree in Section~\ref{sec: cost} allows us 
to prove, in Theorem~\ref{thm: operations}, that the algorithm requires
$O(NM\log N)$~operations.  In Section~\ref{sec: memory}, we present
a memory management strategy and show, in Theorem~\ref{thm: active memory}, 
that with this strategy the fast summation algorithm uses at most 
$O(M\log N)$~active memory locations during each time step.
Section~\ref{sec: example} presents a numerical example, and the paper
concludes with three technical appendices.

Although presented here only for equation~\eqref{eq: integrode} 
discretized in time using a piecewise-constant DG method, the fast 
summation algorithm does not depend in any essential way on this specific
choice and could be used for many other time stepping procedures for
evolution problems with memory.  The key requirements are that the
quadrature weights are computed via local averages of the kernel and that, 
away from the diagonal, the derivatives of the kernel exist and decay
appropriately.  Also, the approximation scheme of Section~\ref{sec: Low-rank},
based on Taylor expansions, is only one possibility, chosen because it is
simple to analyse for the kernel in~\eqref{eq: integrode}.  We could 
instead use an interpolation scheme that requires a user to supply only
pointwise values of the kernel.
\section{Numerical method}\label{sec: DG method}
We define $\omega_\nu(t)=t^{\nu-1}/\Gamma(\nu)$ and denote
the Riemann--Liouville fractional differentiation operator of order~$1-\nu$
by
\[
\B v(t)=(\omega_\nu*v)_t=\frac{\partial}{\partial t}
	\int_0^t\omega_\nu(t-s)v(s)\,ds,
\]
and let $u_t=\partial u/\partial t$ so that, suppressing the dependence
on~$x$,
\begin{equation}\label{eq: ivp}
u_t+\B Au=f(t)\quad\text{for $0<t<T$, with $u(0)=u_0$.}
\end{equation}
We also denote the inner product in~$L_2(\Omega)$ and the bilinear form 
associated with~$A$ by
\[
\iprod{u,v}=\int_\Omega uv\,dx
\quad\text{and}\quad
\bilinA(u,v)=\int_\Omega K\nabla u\cdot\nabla v\,dx.
\]
The weak form of~\eqref{eq: ivp} is then
\[ 
\iprod{u_t,v}+\bilinA(\B u,v)=\iprod{f(t),v}
\quad\text{for all test functions~$v\in L_2\bigl((0,T),\Hdot^1\bigr)$,} 
\]
where $\Hdot^1$ is the Sobolev space~$H^1_0(\Omega)$ in the case of Dirichlet 
boundary conditions~\eqref{eq: Dirichlet}, and $H^1(\Omega)$ in the
case of  Neumann boundary conditions~\eqref{eq: Neumann}.

The numerical solution~$U(t)\approx u(t)$ is a piecewise-constant 
function of~$t$, with coefficients belonging to a continuous, 
piecewise-linear finite element space~$S_h\subseteq\Hdot^1$.  Thus, $U$ 
generally has a jump discontinuity at each time level~$t_n$,  and we 
adopt the usual convention of treating $U(t)$ as continuous for~$t$ 
in the half-open interval~$I_n=(t_{n-1},t_n]$, writing
\[
U^n=U(t_n)=U(t_n^-),\qquad
U^n_+=U(t_n^+),\qquad [U]^n=U^n_+-U^n.
\]
The DG time-stepping procedure is determined by the variational equation
\begin{multline*}
\iprod{U^{n-1}_+,V^{n-1}_+}+\int_{I_n}\bigl[
	\iprod{U'(t),V(t)}+A\bigl(\B U(t),V(t)\bigr)\bigr]\,dt\\
	=\iprod{U^{n-1},V^{n-1}_+}+\int_{I_n}\iprod{f(t),V(t)}\,dt,
\end{multline*}
which must hold for every piecewise-constant test function~$V$ 
with coefficients in~$S_h$.  In our piecewise-constant
case, $U^{n-1}_+=U^n$ so this variational equation reduces to
finding $U^n\in S_h$ such that
\[
\biggl\langle\frac{U^n-U^{n-1}}{k_n},\chi\biggr\rangle
	+\bilinA\bigl(\Bbar^n U,\chi\bigr)=\iprod{\bar f^n,\chi}
	\quad\text{for all $\chi\in S_h$,}
\]
where $\bar f^n=k_n^{-1}\int_{I_n}f(t)\,dt$ and
\begin{equation}\label{eq: Bbar}
\Bbar^n U=\frac{1}{k_n}\int_{I_n}\B U(t)\,dt
	=\frac{1}{k_n}\biggl(\beta_{nn}U^n
	-\sum_{j=1}^{n-1}\beta_{nj}U^j\biggr).
\end{equation}
Thus, the vector of nodal values of~$U^n$ 
satisfies~\eqref{eq: linear system}, with weights given by
\eqref{eq: beta_nn}~and \eqref{eq: beta_nj}.

Our fast algorithm approximates $\Bbar^nU$ with
\begin{equation}\label{eq: Btilde}
\Btilde^n U=\frac{1}{k_n}\biggl(\beta_{nn}U^n
	-\sum_{j=1}^{n-1}\betatilde_{nj}U^j\biggr),
\end{equation}
where $\betatilde_{nj}\approx\beta_{nj}$ for $1\le j\le n-1$, and yields
$\Utilde^n$ satisfying the perturbed equation
\begin{equation}\label{eq: perturbed}
\biggl\langle\frac{\Utilde^n-\Utilde^{n-1}}{k_n},\chi\biggr\rangle
	+\bilinA\bigl(\Btilde^n\Utilde,\chi\bigr)=\iprod{\bar f^n,\chi}
	\quad\text{for all $\chi\in S_h$,}
\end{equation}
with $\Utilde^0=U^0$.  

\begin{theorem}
If, for every real-valued, piecewise-constant function~$V$,
\begin{equation}\label{eq: pos}
\sum_{n=1}^N k_n(\Btilde^n V)V^n\ge 0,
\end{equation}
then the perturbed problem~\eqref{eq: perturbed} is stable:
\[
\|\Utilde^n\|\le\|\Utilde^0\|+2\sum_{j=1}^n k_j\|\bar f^j\|
	\quad\text{for $1\le n\le N$.}
\]
\end{theorem}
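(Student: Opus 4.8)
The plan is to run the classical energy argument for an implicit time-stepping scheme, testing the perturbed equation \eqref{eq: perturbed} against $\chi=\Utilde^n$. Setting $\chi=\Utilde^n$ and multiplying by $k_n$ gives
\[
\bigiprod{\Utilde^n-\Utilde^{n-1},\Utilde^n}+k_n\bilinA\bigl(\Btilde^n\Utilde,\Utilde^n\bigr)=k_n\iprod{\bar f^n,\Utilde^n},
\]
and I would sum this over $n=1,\dots,m$ for an arbitrary fixed $m\le N$. The $L_2$ difference term is handled by the elementary identity $\bigiprod{\Utilde^n-\Utilde^{n-1},\Utilde^n}=\tfrac12\|\Utilde^n\|^2-\tfrac12\|\Utilde^{n-1}\|^2+\tfrac12\|\Utilde^n-\Utilde^{n-1}\|^2$, so that after telescoping $\sum_{n=1}^m\bigiprod{\Utilde^n-\Utilde^{n-1},\Utilde^n}\ge\tfrac12\bigl(\|\Utilde^m\|^2-\|\Utilde^0\|^2\bigr)$.

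The crux is to show that the accumulated memory term $\sum_{n=1}^m k_n\bilinA(\Btilde^n\Utilde,\Utilde^n)$ is nonnegative, using only the scalar hypothesis \eqref{eq: pos}. Two observations bridge the gap. First, although \eqref{eq: pos} is stated as a sum over all $N$ levels, nonnegativity of the partial sum $\sum_{n=1}^m$ follows from it by causality: because $\Btilde^n$ depends on $\Utilde$ only through $\Utilde^1,\dots,\Utilde^n$ (via the weights $\betatilde_{nj}$ with $j<n$ and the diagonal weight $\beta_{nn}$), extending the sequence by zero for $n>m$ leaves the terms with $n\le m$ unchanged and annihilates the rest, so \eqref{eq: pos} applied to the extended sequence returns precisely the partial sum. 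Second, since $\bilinA$ is symmetric and positive semidefinite on $S_h$, I would diagonalise it against the $L_2$ inner product to obtain an $L_2$-orthonormal eigenbasis $\{e_\ell\}$ with eigenvalues $\lambda_\ell\ge0$; writing $\Utilde^n=\sum_\ell c_\ell^n e_\ell$ and using that $\Btilde^n$ acts through the same scalar weights, the memory term splits as $\sum_\ell\lambda_\ell\sum_{n=1}^m k_n(\Btilde^n c_\ell)c_\ell^n$. Each inner scalar sum is nonnegative by \eqref{eq: pos} applied (after the zero-extension above) to the real-valued, piecewise-constant function with nodal values $c_\ell^n$, and $\lambda_\ell\ge0$, so the whole term is nonnegative.

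Combining these two steps yields $\tfrac12\|\Utilde^m\|^2\le\tfrac12\|\Utilde^0\|^2+\sum_{n=1}^m k_n\|\bar f^n\|\,\|\Utilde^n\|$ after a Cauchy--Schwarz bound on the right. To extract a bound on $\|\Utilde^m\|$ alone, I would set $G_m=\max_{0\le n\le m}\|\Utilde^n\|$, attained at some index $n^\ast\le m$, and apply the preceding inequality with $m$ replaced by $n^\ast$; since $\|\Utilde^n\|\le G_m$ for $n\le n^\ast$ this gives $\tfrac12 G_m^2\le\tfrac12\|\Utilde^0\|^2+G_m\sum_{n=1}^m k_n\|\bar f^n\|$. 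Solving the resulting quadratic inequality in $G_m$ and using $\sqrt{a^2+b^2}\le a+b$ for $a,b\ge0$ produces $\|\Utilde^m\|\le G_m\le\|\Utilde^0\|+2\sum_{n=1}^m k_n\|\bar f^n\|$, which is the claimed estimate. I expect the main obstacle to be the second step above, namely transferring the scalar positivity \eqref{eq: pos} to the operator-level quadratic form $\sum_n k_n\bilinA(\Btilde^n\Utilde,\Utilde^n)$; it is the diagonalisation of $\bilinA$ together with the causal, lower-triangular structure of the weights that makes this transfer legitimate.
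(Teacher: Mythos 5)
Your proof is correct and is precisely the ``simple energy argument'' that the paper invokes by citing \cite[Theorem~1]{McLeanMustapha2009}: test with $\chi=\Utilde^n$, telescope the $L_2$ terms, discard the nonnegative memory term, and close with the max/quadratic-inequality kick-back that produces the factor~$2$. The two bridging steps you supply --- obtaining nonnegativity of partial sums from~\eqref{eq: pos} by zero-extension (legitimate because of the lower-triangular structure of the weights, so the terms with $n>m$ carry the factor $V^n=0$), and transferring the scalar positivity to the form $\sum_n k_n\bilinA(\Btilde^n\Utilde,\Utilde^n)$ by diagonalising $\bilinA$ against the $L_2$ inner product --- are both sound and are exactly the details the citation glosses over.
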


\begin{proof}
This estimate follows by a simple energy 
argument~\cite[Theorem~1]{McLeanMustapha2009}.
\end{proof}

In Appendix~\ref{sec: lower bound}, we prove a lower bound
\begin{equation}\label{eq: lower}
\sum_{n=1}^N k_n(\Bbar^n V)V^n\ge \rho_\nu T^{\nu-1}\sum_{n=1}^N k_n(V^n)^2,
\end{equation}
where the constant~$\rho_\nu>0$ depends only on~$\nu$, allowing us to
show the following.

\begin{corollary}\label{cor: stability}
The perturbed DG method~\eqref{eq: perturbed} is stable if
\[
\sum_{j=1}^{n-1}|\betatilde_{nj}-\beta_{nj}|\le\rho_\nu T^{\nu-1}k_n
\quad\text{for $2\le n\le N$,}
\]
and
\[
\sum_{n=j+1}^N|\betatilde_{nj}-\beta_{nj}|\le\rho_\nu T^{\nu-1}k_j
\quad\text{for $1\le j\le N-1$.}
\]
\end{corollary}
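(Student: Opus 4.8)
The plan is to show that the two hypotheses together force the positivity condition~\eqref{eq: pos}, after which the stated stability estimate is immediate from the preceding theorem. First I would record that $\Btilde^n V$ and $\Bbar^n V$ differ only through the off-diagonal weights, so that
\[
k_n\bigl(\Btilde^n V-\Bbar^n V\bigr)=-\sum_{j=1}^{n-1}(\betatilde_{nj}-\beta_{nj})V^j.
\]
Multiplying by $V^n$, summing over~$n$, and inserting the exact operator $\Bbar^n$ gives the splitting
\[
\sum_{n=1}^N k_n(\Btilde^n V)V^n
	=\sum_{n=1}^N k_n(\Bbar^n V)V^n
	-\sum_{1\le j<n\le N}(\betatilde_{nj}-\beta_{nj})V^jV^n,
\]
and the lower bound~\eqref{eq: lower} controls the first sum from below by $\rho_\nu T^{\nu-1}\sum_{n=1}^N k_n(V^n)^2$.

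The heart of the argument is to show that the second (double) sum cannot exceed that same quantity in absolute value. I would bound each product by Young's inequality, $|V^jV^n|\le\tfrac12\bigl((V^j)^2+(V^n)^2\bigr)$, and then separate the resulting estimate into a row part weighted by $(V^n)^2$ and a column part weighted by $(V^j)^2$:
\[
\biggl|\sum_{1\le j<n\le N}(\betatilde_{nj}-\beta_{nj})V^jV^n\biggr|
	\le\frac12\sum_{n=2}^N(V^n)^2\sum_{j=1}^{n-1}|\betatilde_{nj}-\beta_{nj}|
	+\frac12\sum_{j=1}^{N-1}(V^j)^2\sum_{n=j+1}^N|\betatilde_{nj}-\beta_{nj}|.
\]
The first hypothesis bounds the inner row sum by $\rho_\nu T^{\nu-1}k_n$ and the second bounds the inner column sum by $\rho_\nu T^{\nu-1}k_j$, so each of the two halves is at most $\tfrac12\rho_\nu T^{\nu-1}\sum_n k_n(V^n)^2$, and they combine to the full $\rho_\nu T^{\nu-1}\sum_n k_n(V^n)^2$.

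Subtracting, the negative contribution exactly cancels the lower bound, yielding $\sum_{n=1}^N k_n(\Btilde^n V)V^n\ge0$, which is~\eqref{eq: pos}; the theorem then delivers the claimed stability estimate. The step I expect to be the main obstacle is not technical but structural: recognizing that stability requires controlling the perturbation matrix $(\betatilde_{nj}-\beta_{nj})$ \emph{both} along rows \emph{and} along columns. The symmetric splitting produced by Young's inequality is precisely what forces the two one-sided conditions in the corollary, and the factor $\tfrac12$ attached to each is exactly what makes the two halves recombine to cancel the lower bound with no slack to spare.
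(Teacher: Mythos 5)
Your proof is correct and follows essentially the same route as the paper: the same decomposition of $\sum_n k_n(\Btilde^n V)V^n$ into the exact quadratic form plus the perturbation, the same Young's inequality splitting $|V^jV^n|\le\tfrac12(V^j)^2+\tfrac12(V^n)^2$ into row and column sums, and the same appeal to the lower bound~\eqref{eq: lower} to verify the positivity condition~\eqref{eq: pos}. Your write-up merely makes explicit the final cancellation step that the paper leaves implicit.
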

\begin{proof}
Write
\[
\Delta=\sum_{n=1}^N k_n(\Btilde^n V)V^n-\sum_{n=1}^N k_n(\Bbar^n V)V_n
	=\sum_{n=2}^N\sum_{j=1}^{n-1}(\beta_{nj}-\betatilde_{nj})V_jV_n,
\]
and observe that since~$|V^jV^n|\le\tfrac12(V^j)^2+\tfrac12(V^n)^2$,
\begin{multline*}
|\Delta|\le\frac12\sum_{j=1}^{N-1}(V^j)^2\sum_{n=j+1}^N
		|\betatilde_{nj}-\beta_{nj}|
	+\frac12\sum_{n=2}^N(V^n)^2\sum_{j=1}^{n-1}
		|\betatilde_{nj}-\beta_{nj}|\\
	\le\rho_\nu T^{\nu-1}\sum_{n=1}^Nk_n(V^n)^2.
\end{multline*}
\end{proof}

To estimate the effect on the solution~$U^n$ of perturbing 
the weights~$\beta_{nj}$, we introduce the piecewise-constant interpolation
operator
\[
\Pi u(t)=u(t_n)\quad\text{for $t_{n-1}<t\le t_n$.}
\]
In the next result, for simplicity we treat the semi-discrete DG method
in which there is no spatial discretization; the fully-discrete method
could be analysed using the methods of McLean and
Mustapha~\cite[Section~5]{McLeanMustapha2009}.

\begin{theorem}\label{thm: accuracy}
Assume that $S_h=\Hdot^1$ and $\Utilde^0=u_0$.
If the perturbed equation~\eqref{eq: perturbed} is stable,  and if
\[
\sum_{j=1}^n\sum_{l=1}^{j-1}|\betatilde_{jl}-\beta_{jl}|
	\le\epsilon_n,
\]
then
\begin{equation}\label{eq: time error}
\|\Utilde^n-u(t_n)\|\le2\epsilon_n\max_{1\le j\le n-1}\|Au(t_j)\|
	+2\sum_{j=1}^nk_j\|\Bbar^n A(u-\Pi u)\|.
\end{equation}
\end{theorem}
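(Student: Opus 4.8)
The plan is to realize the error $e^n=\Utilde^n-u(t_n)$ as the solution of a perturbed DG scheme with zero initial data and an explicit forcing term, and then to invoke the stability estimate that is assumed in the hypothesis. First I would record the equation satisfied by the exact solution: taking the weak form $\iprod{u_t,v}+\bilinA(\B u,v)=\iprod{f,v}$ with the time-constant test function $v(t)=\chi\,\mathbf 1_{I_n}(t)$, integrating over $I_n$ and dividing by $k_n$ gives
\[
\Bigl\langle\frac{u(t_n)-u(t_{n-1})}{k_n},\chi\Bigr\rangle+\bilinA\bigl(\overline{\B u}^n,\chi\bigr)=\iprod{\bar f^n,\chi}\quad\text{for all }\chi\in\Hdot^1,
\]
where $\overline{\B u}^n=k_n^{-1}\int_{I_n}\B u\,dt$. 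Subtracting this from the perturbed scheme~\eqref{eq: perturbed} and using $\Utilde^0=u_0=u(0)$, so that $e^0=0$, I would split $\Btilde^n\Utilde-\overline{\B u}^n$ into three pieces: $\Btilde^n e$, the weight-perturbation term $(\Btilde^n-\Bbar^n)\Pi u=k_n^{-1}\sum_{j<n}(\beta_{nj}-\betatilde_{nj})u(t_j)$, and the interpolation defect $\Bbar^n\Pi u-\overline{\B u}^n=-\Bbar^n(u-\Pi u)$. Keeping $\bilinA(\Btilde^n e,\chi)$ on the left, this produces the error equation
\[
\Bigl\langle\frac{e^n-e^{n-1}}{k_n},\chi\Bigr\rangle+\bilinA\bigl(\Btilde^n e,\chi\bigr)=\bilinA(g^n,\chi),
\]
with $g^n=k_n^{-1}\sum_{j=1}^{n-1}(\betatilde_{nj}-\beta_{nj})u(t_j)+\Bbar^n(u-\Pi u)$.

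Next I would convert the right-hand side into an $L_2$ load so that the stability theorem applies verbatim. Since $\bilinA(v,\chi)=\iprod{Av,\chi}$ and the elliptic operator $A$ commutes with the temporal operator $\B$, I obtain $\bilinA(g^n,\chi)=\iprod{Ag^n,\chi}$ with
\[
Ag^n=\frac{1}{k_n}\sum_{j=1}^{n-1}(\betatilde_{nj}-\beta_{nj})Au(t_j)+\Bbar^n A(u-\Pi u).
\]
The regularity assumption~\eqref{eq: regularity} guarantees $Au(t_j)\in L_2(\Omega)$, so $Ag^n$ is a legitimate forcing.

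The error equation now has exactly the form of the perturbed method~\eqref{eq: perturbed}, driven by the load $Ag^n$ with vanishing initial data, and the perturbed method is stable by hypothesis. Applying the stability estimate therefore gives $\|e^n\|\le 2\sum_{j=1}^n k_j\|Ag^j\|$, and it remains only to bound the accumulated forcing. By the triangle inequality,
\[
\sum_{j=1}^n k_j\|Ag^j\|\le\sum_{j=1}^n\sum_{l=1}^{j-1}|\betatilde_{jl}-\beta_{jl}|\,\|Au(t_l)\|+\sum_{j=1}^n k_j\|\Bbar^j A(u-\Pi u)\|,
\]
and bounding $\|Au(t_l)\|$ by its maximum over $1\le l\le n-1$ lets me pull that factor out of the double sum, which the hypothesis controls by $\epsilon_n$. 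Multiplying by $2$ yields~\eqref{eq: time error}.

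The main subtlety I anticipate is bookkeeping rather than depth. One must arrange the decomposition so that the memory term appears as $+\bilinA(\Btilde^n e,\chi)$, for which the assumed positivity~\eqref{eq: pos}, and hence stability, applies; a sign error here would spoil the argument. Equally, one must correctly route the forcing through the bilinear form and reexpress it as an $L_2$ load $Ag^n$: the appearance of $Au(t_j)$ rather than $u(t_j)$ in the final estimate is precisely the imprint of the step $\bilinA(v,\chi)=\iprod{Av,\chi}$, and it is exactly why the regularity of $Au$ is required.
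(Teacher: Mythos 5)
Your proposal is correct and follows essentially the same route as the paper: both derive the error equation for $W^n=\Utilde^n-u(t_n)$ by subtracting the exact solution's averaged weak form from~\eqref{eq: perturbed}, identify the forcing as $\bar g^n=(\Bbar^n-\Btilde^n)\Pi Au+\Bbar^n A(u-\Pi u)$ via $\bilinA(v,\chi)=\iprod{Av,\chi}$ and the commutativity of $A$ with the time-averaging operators, and then apply the assumed stability estimate with zero initial data followed by the triangle-inequality bookkeeping. Your write-up is in fact slightly cleaner than the paper's, which contains minor notational slips (writing $W^n=\Utilde^n-U^n$ for $\Utilde^n-u(t_n)$ and $Au(t_j)$ where $Au(t_l)$ is meant).
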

\begin{proof}
Since $\Utilde$ satisfies~\eqref{eq: perturbed} and $u$ satisfies
\[
\biggl\langle\frac{u(t_n)-u(t_{n-1})}{k_n},\chi\biggr\rangle
	+\bilinA\bigl(\Bbar^n u,\chi\bigr)=\iprod{\bar f^n,\chi}
	\quad\text{for all $\chi\in S_h$,}
\]
the difference~$W^n=\Utilde^n-U^n$ satisfies 
\[
\biggl\langle\frac{W^n-W^{n-1}}{k_n},\chi\biggr\rangle
	+\bilinA\bigl(\Btilde^n W,\chi\bigr)=\iprod{\bar g^n,\chi}
	\quad\text{for all $\chi\in S_h$,}
\]
where $\bar g^n=\bigl(\Bbar^n-\Btilde^n\bigr)\Pi Au+\Bbar^nA(u-\Pi u)$.
Noting that $W^0=0$, stability of the perturbed equation implies that
\[
\|W^n\|\le2\sum_{j=1}^nk_j\|\bar g^n\|,
\]
and from~\eqref{eq: Bbar}~and \eqref{eq: Btilde} we see that
\[
\bigl(\Bbar^j-\Btilde^j\bigr)\Pi Au
	=k_j^{-1}\sum_{l=1}^{j-1}
	\bigl(\betatilde_{jl}-\beta_{jl}\bigr)Au(t_j).
\]
\end{proof}

The second term on the right-hand side of~\eqref{eq: time error} does 
not involve~$\betatilde_{nj}$
and is $O(k^\sigma)$~\cite{McLeanMustapha2009}.
\section{Taylor approximation of the weights}
\label{sec: Low-rank}
Recall from~\eqref{eq: beta_nn} that $\beta_{nn}=\omega_{1+\nu}(k_n)$,
and from~\eqref{eq: beta_nj} that for $1\le j\le n-1$,
\begin{equation}\label{eq: beta_nj repn}
\beta_{nj}=\int_{I_j}\bigl[\omega_\nu(t_{n-1}-s)-\omega_\nu(t_n-s)\bigr]
	\,ds
	=-\int_{I_j}\int_{I_n}\omega_{\nu-1}(t-s)\,dt\,ds.
\end{equation}
Denote the midpoint of the interval~$I_n$ by 
$t_{n-1/2}=\tfrac12(t_{n-1}+t_n)$, and define
\[
B_\mu(t,k)=\int_{-k/2}^{k/2}\omega_\mu(t+s)\,ds
	=\omega_{1+\mu}(t+\tfrac12 k)-\omega_{1+\mu}(t-\tfrac12 k)
\]
for $t>k/2$ and $-\infty<\mu<\infty$.
The approximate weights~$\betatilde_{nj}$ are determined as follows.
\ifpreprint
See Appendices \ref{sec: eval}~and \ref{sec: computing low} for notes
on the stable evaluation of these quantities.
\fi

\begin{theorem}\label{thm: Taylor expansion}
Let $0<\eta\le1$ and suppose that $0\le a<b<c<d\le T$ with
\begin{equation}\label{eq: a b c d admissible}
I_j\subseteq(a,b],\qquad I_n\subseteq(c,d],\qquad
\frac{b-a}{c-b}\le\eta.
\end{equation}
Denote the midpoint of~$[a,b]$ by $\smid=\tfrac12(a+b)$, and define 
\[
\phi_{pn}=(-1)^pB_{\nu-p}(t_{n-1/2}-\smid,k_n),\qquad
\psi_{pj}=B_p(t_{j-1/2}-\smid,k_j),
\]
and
\[
\betatilde_{nj}=\sum_{p=1}^r\phi_{pn}\psi_{pj}.
\]
Then
\[
|\betatilde_{nj}-\beta_{nj}|\le 2^{2-\nu}(r+1)(\eta/2)^r\beta_{nj}.
\]
\end{theorem}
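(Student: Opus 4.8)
The plan is to recognise $\betatilde_{nj}$ as exactly the integral, over $I_j\times I_n$, of the degree-$(r-1)$ Taylor polynomial of the kernel $\omega_{\nu-1}(t-s)$ expanded in the variable~$s$ about the midpoint~$\smid$. Starting from the representation $\beta_{nj}=-\int_{I_j}\int_{I_n}\omega_{\nu-1}(t-s)\,dt\,ds$ in \eqref{eq: beta_nj repn} and the identity $\frac{d}{dz}\omega_\mu(z)=\omega_{\mu-1}(z)$, so that $\partial_s^{\,p}\omega_{\nu-1}(t-s)=(-1)^p\omega_{\nu-1-p}(t-s)$, I would first write
\[
\omega_{\nu-1}(t-s)=\sum_{p=1}^{r}\frac{(-1)^{p-1}}{(p-1)!}\,\omega_{\nu-p}(t-\smid)\,(s-\smid)^{p-1}+R_r(t,s).
\]
Integrating the generic term over $t\in I_n$ and $s\in I_j$ and recalling $B_\mu(t,k)=\int_{-k/2}^{k/2}\omega_\mu(t+\sigma)\,d\sigma$, the change of variables $t=t_{n-1/2}+\sigma$ gives $\int_{I_n}\omega_{\nu-p}(t-\smid)\,dt=(-1)^p\phi_{pn}$, while $\int_{I_j}(s-\smid)^{p-1}\,ds=(p-1)!\,\psi_{pj}$. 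Hence applying $-\int_{I_j}\int_{I_n}$ to the Taylor sum yields $\betatilde_{nj}=\sum_{p=1}^r\phi_{pn}\psi_{pj}$, and the first step concludes with the clean identity $\beta_{nj}-\betatilde_{nj}=-\int_{I_j}\int_{I_n}R_r(t,s)\,dt\,ds$. Since $\omega_{\nu-1}<0$ on $(0,\infty)$, the same representation gives $\beta_{nj}=\int_{I_j}\int_{I_n}|\omega_{\nu-1}(t-s)|\,dt\,ds$, so it suffices to bound the remainder pointwise by a constant multiple of $|\omega_{\nu-1}(t-s)|$.

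Next I would exploit the homogeneity of the kernel. Writing $w=t-\smid>0$ and $x=(s-\smid)/w$, the scaling $\omega_{\nu-1}(t-s)=\omega_{\nu-1}(w)(1-x)^{\nu-2}$ turns the Taylor expansion in~$s$ into the binomial series $(1-x)^{\nu-2}=\sum_{q\ge0}c_q x^q$ with $c_q=(-1)^q\binom{\nu-2}{q}$, so that $R_r(t,s)=\omega_{\nu-1}(w)\,E(x)$ with $E(x)=\sum_{q\ge r}c_q x^q$, and the pointwise ratio becomes
\[
\frac{|R_r(t,s)|}{|\omega_{\nu-1}(t-s)|}=|E(x)|\,(1-x)^{2-\nu}.
\]
A short computation with $c_q/c_{q-1}=1+(1-\nu)/q$ shows $c_q\ge0$ and $c_q\le q+1$; this nonnegativity is what lets me pass from the signed sum to $|E(x)|\le E(|x|)$, which is the crucial point because $s-\smid$ (and hence $x$) changes sign across $I_j$.

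The third step is purely geometric and quantitative. The admissibility condition $I_j\subseteq(a,b]$, $I_n\subseteq(c,d]$, $(b-a)/(c-b)\le\eta$ gives $(t-\smid)-|s-\smid|\ge t-b>c-b$ and $|s-\smid|\le(b-a)/2$, whence the sharp estimate $|x|/(1-|x|)\le\eta/2$; together with $\eta\le1$ this yields $|x|\le1/3$ and $|x|^r\le(\eta/2)^r(1-|x|)^r$. Using $c_q\le q+1$ and the elementary inequality $q+1\le(r+1)(q-r+1)$ for $q\ge r$, I get $E(|x|)\le(r+1)|x|^r/(1-|x|)^2$, and feeding in the geometric bound leaves $(r+1)(\eta/2)^r(1-|x|)^{r-2}(1+|x|)^{2-\nu}$. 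Since $r\ge1$ and $(1-|x|)^{-1}(1+|x|)^{2-\nu}$ is increasing on $[0,1/3]$, its maximum at $|x|=1/3$ is $\tfrac32(4/3)^{2-\nu}\le2^{2-\nu}$ (the last inequality reducing to $(2/3)^\nu\ge2/3$, valid for $\nu\le1$). This bounds the ratio by $2^{2-\nu}(r+1)(\eta/2)^r$, and integrating against $|\omega_{\nu-1}(t-s)|$ gives the claim.

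I expect the main obstacle to be controlling the remainder uniformly over the \emph{whole} rectangle $I_j\times I_n$, in particular over the part where $s<\smid$. A Lagrange form of the remainder would force me to track an intermediate point whose position relative to~$\smid$ (and hence the sign of the relevant factor) varies, obstructing a clean comparison with $|\omega_{\nu-1}(t-s)|$; representing the remainder as the binomial tail $E(x)$ with nonnegative coefficients sidesteps this entirely via $|E(x)|\le E(|x|)$. The second delicate point is extracting the exact constant $2^{2-\nu}$: this requires the refined separation estimate $|x|/(1-|x|)\le\eta/2$ rather than the cruder $|x|\le\eta/2$, so that the spurious factors of $(1-|x|)$ cancel and only the monotone quantity $(1-|x|)^{-1}(1+|x|)^{2-\nu}$, maximised at the extreme admissible configuration, remains.
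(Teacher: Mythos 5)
Your proof is correct, and it diverges from the paper's precisely at the step where all the work lies: how the Taylor remainder is represented and bounded. Both arguments expand $\omega_{\nu-1}(t-s)$ in $s$ about $\smid$, observe that the truncated expansion integrates over $I_j\times I_n$ to exactly $\betatilde_{nj}$, and finish by dominating the remainder pointwise by a multiple of $-\omega_{\nu-1}(t-s)>0$, whose double integral is $\beta_{nj}$. The paper, however, keeps the remainder in integral form, $(-1)^r\int_{\smid}^s\omega_r(s-y)\,\omega_{\nu-1-r}(t-y)\,dy$, and so must control $t-y$ from below; this forces exactly the case analysis you anticipated (for $s<\smid$ the point $y$ lies on the far side of $\smid$, and admissibility gives $t-y\ge(t-s)(1-\tfrac12\eta)$), after which the factor $r+1$ comes from a Gamma-function identity for $\bigl|\Gamma(\nu-1)/(r!\,\Gamma(\nu-r-1))\bigr|$ and the constant from $\bigl(2/(2-\eta)\bigr)^{2-\nu}\le2^{2-\nu}$. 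You instead use homogeneity to write the remainder as the binomial-series tail $\omega_{\nu-1}(t-\smid)\,E(x)$ with $x=(s-\smid)/(t-\smid)$ and nonnegative coefficients $c_q$, so that $|E(x)|\le E(|x|)$ disposes of the sign of $s-\smid$ with no case analysis; your factor $r+1$ comes from the elementary bounds $c_q\le q+1\le(r+1)(q-r+1)$ plus a geometric series, and your constant from the refined separation estimate $|x|/(1-|x|)\le\eta/2$ (hence $|x|\le1/3$) together with $\tfrac32(4/3)^{2-\nu}\le2^{2-\nu}$. What each approach buys: the paper's integral-remainder route is self-contained calculus with no convergence question to address, while yours eliminates the two-case geometry, needs only the (immediate) convergence of the binomial series on $|x|\le1/3$, and in fact produces a slightly sharper intermediate constant, $\tfrac32(4/3)^{2-\nu}$ versus $(2/(2-\eta))^{2-\nu}$, before both are rounded up to $2^{2-\nu}$. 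I checked your individual steps --- the sign bookkeeping identifying $\betatilde_{nj}$, the positivity $c_q\ge0$ and bound $c_q\le q+1$ via $c_q/c_{q-1}=1+(1-\nu)/q$, the tail estimate $E(|x|)\le(r+1)|x|^r/(1-|x|)^2$, and the monotonicity argument on $[0,1/3]$ --- and they are all valid, so the argument is complete.
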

\begin{proof}
Taylor expansion about~$\smid$ gives
\begin{multline*}
\omega_{\nu-1}(t-s)=\sum_{p=0}^{r-1}(-1)^p\omega_{\nu-1-p}(t-\smid)
	\omega_{1+p}(s-\smid)\\
	+(-1)^r\int_{\smid}^s\omega_r(s-y)\omega_{\nu-1-r}(t-y)\,dy,
\end{multline*}
so, by~\eqref{eq: beta_nj repn}, integrating over $t\in I_n$~and
$s\in I_j$ gives $\beta_{nj}=\betatilde_{nj}+E_{rnj}$ with
\[
E_{rnj}=(-1)^{r+1}\int_{I_n}\int_{I_j}\int_{\smid}^s\omega_r(s-y)
	\omega_{\nu-1-r}(t-y)\,dy\,ds\,dt.
\]
Since $y\in I_j\subseteq(a,b]$ and $t\in I_n\subseteq(c,d]$, we have 
$t-y\ge c-b$, so
\[
|E_{rnj}|\le\frac{(c-b)^{-r}}{|\Gamma(\nu-r-1)|}
	\int_{I_n}\int_{I_j}\frac{|s-\smid|^{r-1}}{r!}
	\biggl| \int_{\smid}^s\frac{dy}{(t-y)^{2-\nu}}\biggr|
	\,ds\,dt.
\]
If $\smid\le s$, then $y\le s$ so $t-y\ge t-s$. However, if $s\le\smid$
then $0\le y-s\le\smid-s\le\tfrac12(b-a)$ so
\[
t-y=(t-s)-(y-s)\ge(t-s)-\tfrac12(b-a)
	=(t-s)\biggl(1-\frac{\tfrac12(b-a)}{t-s}\biggr).
\]
Also, $t-s\ge c-b$ because $s\in[a,b]$, and 
assumption~\eqref{eq: a b c d admissible} implies that
$t-y\ge(t-s)(1-\tfrac12\eta)$.  Thus,
\[
\biggl| \int_{\smid}^s\frac{dy}{(t-y)^{2-\nu}}\,dy\biggr|
	\le\frac{1}{(1-\tfrac12\eta)^{2-\nu}}\,
	\frac{|s-\smid|}{(t-s)^{2-\nu}},
\]
and since $|s-\smid|\le\tfrac12(b-a)\le\tfrac12\eta(c-b)$, it follows that
\[
|E_{rnj}|\le\biggl(\frac{2}{2-\eta}\biggr)^{2-\nu}
	\frac{(\eta/2)^r}{r!|\Gamma(\nu-r-1)|}
	\int_{I_n}\int_{I_j}\frac{ds\,dt}{(t-s)^{2-\nu}}.
\]
The identity 
$\Gamma(\nu-1)=(\nu-2)(\nu-3)\cdots(\nu-r)(\nu-r-1)\Gamma(\nu-r-1)$
shows that
\[
\biggl|\frac{\Gamma(\nu-1)}{r!\Gamma(\nu-r-1)}\biggr|\le
	\frac{2-\nu}{2}\,\frac{3-\nu}{3}\cdots\frac{r-\nu}{r}\,(r+1-\nu)
	\le r+1,
\]
and because $\Gamma(\nu-1)=\Gamma(\nu)/(\nu-1)<0$,
\[
|E_{rnj}|\le 2^{2-\nu}(r+1)(\eta/2)^r\biggl(-\int_{I_n}\int_{I_j}
	\frac{(t-s)^{\nu-2}}{\Gamma(\nu-1)}\,ds\,dt\biggr).
\]
\end{proof}

\ifpreprint
Whereas $\psi_{pj}$ depends on~$[a,b]$ (via $\smid$), in the
following alternative expansion $\psi^\star_{pj}$ depends only on
$p$~and $j$.  However, the sum that defines $\phi_{pn}^\star$ is
susceptible to loss of precision.

\begin{corollary}\label{cor: phi psi star}
If we define
\[
\phi_{pn}^\star=\sum_{q=p}^r\frac{(-\smid)^{q-p}}{(q-p)!}
	\,\phi_{qn}
\quad\text{and}\quad
\psi_{pj}^\star=B_p(t_{j-1/2},k_j),
\]
then
\[
\betatilde_{nj}=\sum_{p=0}^r\phi^\star_{pn}\psi^\star_{pj}.
\]
\end{corollary}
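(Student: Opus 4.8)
The plan is to recognize the starred expansion as an \emph{exact} reorganization of the one in Theorem~\ref{thm: Taylor expansion}, obtained by re-centering each factor $\psi_{pj}$ from the cluster midpoint $\smid$ back to the origin. The key point is that the second index of $\psi_{pj}=B_p(t_{j-1/2}-\smid,k_j)$ is a positive \emph{integer}, so $\omega_p(\tau)=\tau^{p-1}/(p-1)!$ is a polynomial and the re-centering incurs no truncation error: it is a finite binomial identity, not a fresh approximation. The weights $\phi_{pn}$ need not be expanded at all; they are merely recombined into the $\phi^\star_{pn}$.

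First I would expand, using the integral form $B_p(t,k)=\int_{-k/2}^{k/2}\omega_p(t+s)\,ds$ together with the binomial theorem,
\[
\omega_p(t_{j-1/2}-\smid+s)=\sum_{i=0}^{p-1}\frac{(-\smid)^{p-1-i}}{(p-1-i)!}\,\omega_{i+1}(t_{j-1/2}+s).
\]
Integrating term by term over $s\in[-k_j/2,k_j/2]$ and putting $q=i+1$ gives the exact relation
\[
\psi_{pj}=\sum_{q=1}^p\frac{(-\smid)^{p-q}}{(p-q)!}\,\psi^\star_{qj},\qquad \psi^\star_{qj}=B_q(t_{j-1/2},k_j).
\]
Substituting this into $\betatilde_{nj}=\sum_{p=1}^r\phi_{pn}\psi_{pj}$ and interchanging the two finite sums over the triangle $1\le q\le p\le r$ yields
\[
\betatilde_{nj}=\sum_{q=1}^r\Bigl(\sum_{p=q}^r\frac{(-\smid)^{p-q}}{(p-q)!}\,\phi_{pn}\Bigr)\psi^\star_{qj}=\sum_{q=1}^r\phi^\star_{qn}\psi^\star_{qj},
\]
the inner bracket being precisely $\phi^\star_{qn}$ as defined in the statement (reading $\phi_{pn}=(-1)^pB_{\nu-p}(t_{n-1/2}-\smid,k_n)$ from Theorem~\ref{thm: Taylor expansion}, a formula that also makes sense at $p=0$).

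It then remains only to reconcile the lower limit, since the corollary sums from $p=0$ whereas the computation produces a sum from $q=1$. Because $\omega_1\equiv1$, we have $\psi^\star_{0j}=B_0(t_{j-1/2},k_j)=\omega_1(t_{j-1/2}+k_j/2)-\omega_1(t_{j-1/2}-k_j/2)=0$, so the $p=0$ term contributes nothing and the range may freely be extended down to $p=0$, as written. There is no genuine obstacle here---the result is an algebraic identity rather than a new estimate---so the only steps that require care are the interchange of the summation order over the triangular index set and this harmless extension of the summation range.
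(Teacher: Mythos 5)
Your proposal is correct and follows essentially the same route as the paper: binomially expand $\psi_{pj}=\int_{I_j}\omega_p(s-\smid)\,ds$ in powers of $s$, which re-expresses it as $\sum_{q=1}^p\frac{(-\smid)^{p-q}}{(p-q)!}\psi^\star_{qj}$, then substitute into $\betatilde_{nj}=\sum_{p=1}^r\phi_{pn}\psi_{pj}$ and interchange the finite triangular sums. Your added observation that $\psi^\star_{0j}=B_0(t_{j-1/2},k_j)=0$, which reconciles the paper's lower limit $p=0$ with the sum starting at $q=1$, is a nice touch that the paper leaves implicit.
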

\begin{proof}
The binomial expansion gives
\[
\psi_{pj}=\int_{I_j}\frac{(s-\smid)^{p-1}}{(p-1)!}\,ds
	=\sum_{q=1}^p\frac{(-\smid)^{p-q}}{(p-q)!}
	\int_{I_j}\frac{s^{q-1}}{(q-1)!}\,ds
\]
so
\[
\betatilde_{nj}=\sum_{q=1}^r\biggl(\sum_{p=q}^r\phi_{pn}\,
	\frac{(-\smid)^{p-q}}{(p-q)!}\biggr)
	\int_{I_j}\frac{s^{q-1}}{(q-1)!}\,ds.
\]
\end{proof}
\fi
\section{Cluster tree}
\label{sec: cluster}
We introduce the notation
\[
\Cluster(j,n)=\{I_j,I_{j+1},\ldots,I_n\}\quad\text{for $1\le j\le n\le N$,}
\]
and refer to any such set of consecutive subintervals as a~\emph{cluster}.
A \emph{cluster tree} for the mesh~$0=t_0<t_1<\cdots<t_N=T$ is a 
tree~$\Tree$ having the following properties:
\begin{enumerate}
\item each node of~$\Tree$ is a cluster;
\item the root node of~$\Tree$ is $\Cluster(1,N)$;
\item each node is either a leaf or else equals the disjoint union of
its children.
\end{enumerate}
Let $\Leaves$ denote the set of leaves in the cluster tree~$\Tree$, and
observe that for any two distinct nodes $\Cluster_1\ne\Cluster_2$ of~$\Tree$ 
exactly one of the following holds~\cite[Remark~3.4]{HackbuschNowak1989}: 
$\Cluster_1\subsetneq\Cluster_2$; $\Cluster_2\subsetneq\Cluster_1$; 
or $\Cluster_1\cap\Cluster_2=\emptyset$.

In the obvious way, we define the \emph{length} of a cluster~$\Cluster$ 
to be the length of the underlying interval~$\bigcup\Cluster$;
thus,
\[
\Len(\Cluster)=t_n-t_{j-1}\quad\text{if $\Cluster=\Cluster(j,n)$.}
\]
The \emph{distance} between two clusters is the Euclidean distance between 
the underlying point sets in~$\R$,
\[
\Dist(\Cluster_1,\Cluster_2)=\inf\{\,|t-s|:
	\text{$t\in\textstyle{\bigcup\Cluster_1}$ and 
              $s\in\textstyle{\bigcup\Cluster_2}$}\,\},
\]
so in particular, 
$\Dist\bigl(\Cluster(j_1,n_1),\Cluster(j_2,n_2)\bigr)=t_{j_2-1}-t_{n_1}$
if $j_2>n_1$.
We define the \emph{history} of a cluster to be the entire preceding
half-open interval:
\[
\History(\Cluster)=(0,t_{j-1}]\quad\text{if $\Cluster=\Cluster(j,n)$.}
\]

Given a leaf~$\Leaf\in\Leaves$, and a parameter~$\eta$ in the
range~$0<\eta\le1$, we say that a cluster~$\Cluster$ 
is $(\Leaf,\eta)$-\emph{admissable} if
\[
\bigcup\Cluster\subseteq\History(\Leaf)\quad\text{and}\quad
\Len(\Cluster)\le\eta\Dist(\Cluster,\Leaf).
\]
Thus, the conclusions of Theorem~\ref{thm: Taylor expansion}
hold for $I_j\in\Cluster$ and $I_n\in\Leaf$ with
$\bigcup\Cluster=(a,b]$~and $\bigcup\Leaf=(c,d]$.
Notice that if $\Cluster$ is $(\Leaf,\eta)$-admissible, then so are all
the descendents of~$\Cluster$.

An \emph{$(\Leaf,\eta)$-admissible cover} is a set~$\Cover$ 
of clusters such that
\begin{enumerate}
\item each cluster~$\Cluster\in\Cover$ is a node of~$\Tree$;
\item $\bigcup\{\,I:I\in\Cluster\in\Cover\,\}=\History(\Leaf)$;
\item if $\Cluster_1$, $\Cluster_2\in\Cover$ with~$\Cluster_1\ne\Cluster_2$,
then $\Cluster_1\cap\Cluster_2=\emptyset$;
\item if $\Cluster\in\Cover$ then either $\Cluster$ is 
$(\Leaf,\eta)$-admissible, or else $\Cluster\in\Leaves$.
\end{enumerate}
A trivial example of an $(\Leaf,\eta)$-admissible cover is the set of
leaves in the history of~$\Leaf$, that is,
\[
\Cover=\{\,\Leaf'\in\Leaves:\Leaf'\subseteq\History(\Leaf)\}.
\]

\begin{lemma}\label{lem: non-ad leaves}
All $(\Leaf,\eta)$-admissible covers contain the same non-admissible 
leaves.
\end{lemma}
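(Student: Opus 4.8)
The plan is to exhibit a single, cover-independent collection of leaves and to show that every $(\Leaf,\eta)$-admissible cover has precisely this collection as its set of non-admissible leaves. Let
\[
\mathcal{N}=\{\,\Leaf'\in\Leaves:\bigcup\Leaf'\subseteq\History(\Leaf)
\text{ and }\Leaf'\text{ is not }(\Leaf,\eta)\text{-admissible}\,\}.
\]
This set depends only on~$\Leaf$, $\eta$ and the tree~$\Tree$. I claim that for every $(\Leaf,\eta)$-admissible cover~$\Cover$, the non-admissible leaves belonging to~$\Cover$ are exactly the members of~$\mathcal{N}$; since $\mathcal{N}$ makes no reference to~$\Cover$, this yields the lemma.

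One inclusion is routine. If $\Leaf'\in\Cover$ is a non-admissible leaf, then $\Leaf'$ is a node of~$\Tree$ that is a leaf, hence a leaf of~$\Tree$; moreover $\bigcup\Leaf'\subseteq\History(\Leaf)$ because the clusters of~$\Cover$ are pairwise disjoint with union~$\History(\Leaf)$; and $\Leaf'$ is non-admissible by assumption. Thus $\Leaf'\in\mathcal{N}$.

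For the reverse inclusion, fix $\Leaf'\in\mathcal{N}$; I must show $\Leaf'\in\Cover$. Since the clusters of~$\Cover$ partition the subintervals making up~$\History(\Leaf)$ and $\Leaf'$ consists of such subintervals, some $\Cluster\in\Cover$ contains a subinterval of~$\Leaf'$. By the nested-or-disjoint dichotomy for distinct nodes of~$\Tree$, this~$\Cluster$ satisfies $\Cluster=\Leaf'$, $\Cluster\subsetneq\Leaf'$, or $\Leaf'\subsetneq\Cluster$. As $\Leaf'$ is a leaf of~$\Tree$, no node is strictly contained in it, which rules out $\Cluster\subsetneq\Leaf'$. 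The key step is to exclude $\Leaf'\subsetneq\Cluster$: in that case $\Cluster$ would strictly contain a leaf and so could not itself be a leaf, whence the definition of a cover forces $\Cluster$ to be $(\Leaf,\eta)$-admissible; but admissibility passes to all descendants, and $\Leaf'$ is a descendant of~$\Cluster$, so $\Leaf'$ would be admissible, contradicting $\Leaf'\in\mathcal{N}$. Hence $\Cluster=\Leaf'$, so $\Leaf'\in\Cover$, and being non-admissible it is one of the non-admissible leaves of~$\Cover$.

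I expect the only real obstacle to be this last exclusion, which rests squarely on the inheritance of admissibility by descendants noted just after the definition of admissibility; the remaining steps are bookkeeping with the tree dichotomy and the disjoint-union structure of a cover.
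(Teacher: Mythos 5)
Your proof is correct and follows essentially the same route as the paper's: both rest on the nested-or-disjoint dichotomy for tree nodes, the fact that a leaf has no proper subnode, and the inheritance of admissibility by descendants, which together force the covering cluster meeting the given non-admissible leaf to coincide with it. The only cosmetic difference is that you name the common set explicitly (the non-admissible leaves of $\Tree$ contained in $\History(\Leaf)$) and compare each cover to it, whereas the paper compares two covers directly.
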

\begin{proof}
Let $\Cover_1$ and $\Cover_2$ be $(\Leaf,\eta)$-admissible covers, and
suppose that $\Leaf'\in\Cover_1\cap\Leaves$ is not $(\Leaf,\eta)$-admissible.
Since 
$\Leaf'\subseteq\History(\Leaf)=\bigcup\{\,I: I\in\Cluster\in\Cover_2\,\}$,
there exist $I\in\Cluster\in\Cover_2$ such that $I$ intersects at least
one interval~$I'\in\Leaf'$.  Since $\Leaf'$ is a leaf of~$\Tree$, it follows
that $\Leaf'\subseteq\Cluster$ and hence $\Cluster$ is not 
$(\Leaf,\eta)$-admissible.  Thus, $\Cluster$ must be a leaf, because
$\Cover_2$ is an $(\Leaf,\eta)$-admissible cover, and we conclude that 
$\Cluster=\Leaf'$.
\end{proof}

Algorithm~\ref{alg: divide} defines a recursive procedure, 
$\divideproc(\Cluster,\Cover,\Leaf,\eta)$, that we use to define
a family of clusters~$\MinC(\Leaf)$, as follows:
\begin{algorithmic}
\STATE $\Cluster=\Cluster(1,N)$, $\Cover=\emptyset$
\STATE $\divideproc(\Cluster, \Cover, \Leaf, \eta)$
\STATE $\MinC(\Leaf)=\Cover$
\end{algorithmic} 
This construction is a modified version of~\cite[(3.8b)]{HackbuschNowak1989}.

\begin{theorem}\label{thm: MinC}
$\MinC(\Leaf)$ is the unique minimal $(\Leaf,\eta)$-admissible cover.
\end{theorem}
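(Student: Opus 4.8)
The plan is to read ``minimal'' in the sense of the coarsening partial order on covers: for two $(\Leaf,\eta)$-admissible covers write $\Cover_1\preceq\Cover_2$ when every cluster of $\Cover_2$ is contained in some cluster of $\Cover_1$. This is a genuine partial order, antisymmetry following from the dichotomy that any two nodes of $\Tree$ are nested or disjoint (noted after the definition of $\Leaves$) together with the fact that both families cover $\History(\Leaf)$. Under $\preceq$, a \emph{minimal} cover is one with nothing strictly below it, so it suffices to prove that $\MinC(\Leaf)$ is a \emph{minimum}, i.e. $\MinC(\Leaf)\preceq\Cover$ for every admissible cover $\Cover$; a minimum of a poset is automatically its unique minimal element. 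I would therefore split the proof in two stages: first that $\MinC(\Leaf)$ is itself an $(\Leaf,\eta)$-admissible cover, and then that it lies below every admissible cover.

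For the first stage I would unwind the recursion $\divideproc$. By construction a node $\Cluster$ is inserted into $\Cover$ exactly when $\bigcup\Cluster\subseteq\History(\Leaf)$ and $\Cluster$ is either $(\Leaf,\eta)$-admissible or a leaf of $\Tree$; otherwise the procedure recurses on the children of $\Cluster$ meeting $\History(\Leaf)$. Conditions~(1) and~(4) of an admissible cover then hold by construction, disjointness~(3) holds because once a node is inserted the recursion never descends below it and siblings are disjoint, and the covering property~(2) holds because the leaves of $\Tree$ partition $\Cluster(1,N)$ and, since $\Leaf$ is itself a node, the nesting/disjointness dichotomy forces every other leaf to lie entirely within $\History(\Leaf)$ or entirely outside it, so the recursion exactly exhausts $\History(\Leaf)$. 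The structural fact I want to extract here, and which drives everything, is a \emph{greedy maximality} property: every admissible node $\Cluster$ with $\bigcup\Cluster\subseteq\History(\Leaf)$ is contained in some member of $\MinC(\Leaf)$. Indeed, since admissibility passes to all descendants (noted just before Lemma~\ref{lem: non-ad leaves}), the admissible ancestors of $\Cluster$, including $\Cluster$ itself, form a nonempty chain; let $\Cluster^{+}$ be its highest element. Every proper ancestor of $\Cluster^{+}$ is non-admissible and has children, hence is neither inserted nor a base case, so the recursion passes through all of them, reaches $\Cluster^{+}$, and inserts it. Thus $\Cluster^{+}\in\MinC(\Leaf)$ and $\Cluster\subseteq\Cluster^{+}$.

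For the second stage, let $\Cover$ be any $(\Leaf,\eta)$-admissible cover and fix $\Cluster\in\Cover$; I must place $\Cluster$ inside a single member of $\MinC(\Leaf)$. If $\Cluster$ is not admissible, then by condition~(4) it is a leaf, and Lemma~\ref{lem: non-ad leaves} gives $\Cluster\in\MinC(\Leaf)$, so $\Cluster$ is contained in itself. If instead $\Cluster$ is admissible, then $\bigcup\Cluster\subseteq\History(\Leaf)$ and the greedy maximality property supplies a member $\Cluster^{+}\in\MinC(\Leaf)$ with $\Cluster\subseteq\Cluster^{+}$. In either case every cluster of $\Cover$ sits inside a cluster of $\MinC(\Leaf)$, which is precisely $\MinC(\Leaf)\preceq\Cover$; as $\Cover$ was arbitrary, $\MinC(\Leaf)$ is the minimum, hence the unique minimal admissible cover. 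I expect the only delicate point to be the admissible case of this last step, namely ruling out that an admissible $\Cluster\in\Cover$ is split across several smaller members of $\MinC(\Leaf)$. This is exactly what the greedy stop forbids: the recursion halts at the highest admissible ancestor $\Cluster^{+}\supseteq\Cluster$ and never creates a member strictly below $\Cluster^{+}$, so no such splitting can occur, and the nesting/disjointness dichotomy then makes the containing member unique.
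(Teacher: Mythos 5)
Your proof is correct and follows essentially the same route as the paper: both arguments reduce to showing that every member of an arbitrary $(\Leaf,\eta)$-admissible cover is contained in some member of $\MinC(\Leaf)$, treating non-admissible leaves via Lemma~\ref{lem: non-ad leaves} and admissible clusters via the greedy stopping rule of Algorithm~\ref{alg: divide} (the recursion halts at the highest admissible ancestor). Your write-up is somewhat more explicit than the paper's --- you verify that $\MinC(\Leaf)$ is itself an admissible cover and cast minimality as a minimum in the coarsening partial order rather than as a cardinality bound --- but the key ideas coincide.
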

\begin{proof}
Suppose that $\Cover$ is an $(\Leaf,\eta)$-admissible cover.
Let $\Mluster\in\MinC(\Leaf)$ and let $\Cluster$ be any node of~$\Cover$ 
that intersects $\Mluster$.  If $\Mluster$ 
is a leaf, then $\Mluster$ is not $(L,\eta)$-admissible so 
$\Mluster=\Cluster$ by Lemma~\ref{lem: non-ad leaves}.  If $\Mluster$
is not a leaf, then either $\Cluster$ is a leaf, in which 
case $\Cluster\subsetneq\Mluster$, or else $\Cluster$ is
$(\Leaf,\eta)$-admissible and $\Cluster\subseteq\Mluster$ by the 
construction of~$\MinC(\Leaf)$ using Algorithm~\ref{alg: divide}.  
Hence, in all cases $\Cluster\subseteq\Mluster$, so either
$\Cover=\MinC(\Leaf)$ or else $\#\MinC(\Leaf)<\#\Cover$.
\end{proof}

\begin{algorithm}
\caption{$\divideproc(\Cluster,\Cover,\Leaf,\eta)$}
\label{alg: divide}
\begin{algorithmic}
\STATE Determine $a$, $b$, $c$, $d$ such that $\bigcup\Cluster=(a,b]$ and 
$\bigcup\Leaf=(c,d]$
\STATE $\Cover=\emptyset$
\IF{$a\le c$}
  \IF[$\Cluster$ is $(\Leaf,\eta)$-admissible]{$b\le c$ \ANDAND $b-a\le\eta(c-b)$}
    \STATE $\Cover=\Cover\cup\{\Cluster\}$
  \ELSIF{$b\le c$ \ANDAND $\Cluster\in\Leaves$}
    \STATE $\Cover=\Cover\cup\{\Cluster\}$
  \ELSE
    \FORALL{$\Cluster'\in\Children(\Cluster)$}
      \STATE $\divideproc(\Cluster',\Cover,\Leaf,\eta)$
    \ENDFOR
  \ENDIF
\ENDIF
\end{algorithmic}
\end{algorithm}

The minimal admissible cover~$\MinC(\Leaf)$ is the disjoint union of the 
sets
\begin{equation}\label{eq: Near Far}
\begin{aligned}
\Near(\Leaf)&=\{\,\Cluster\in\MinC(\Leaf):
\text{$\Cluster$ is not $(\Leaf,\eta)$-admissible}\,\},\\
\Far(\Leaf)&=\{\,\Cluster\in\MinC(\Leaf):
\text{$\Cluster$ is $(\Leaf,\eta)$-admissible}\,\}.
\end{aligned}
\end{equation}
Given $n$, there is a unique leaf~$\Leaf=\Leaf_n$ containing $I_n$,
and we define $\phi_{pn}(\Cluster)$
and $\psi_{pn}(\Cluster)$ using the formulae of 
Theorem~\ref{thm: Taylor expansion} with $(a,b]=\bigcup\Cluster$
and $(c,d]=\bigcup\Leaf_n$.  We then define
\begin{equation}\label{eq: beta near}
\betatilde_{nj}=\beta_{nj}\quad
	\text{if $I_j\in\Leaf_n$ or $I_j\in\Cluster\in\Near(\Leaf_n)$,}
\end{equation}
and
\begin{equation}\label{eq: beta far}
\betatilde_{nj}=\sum_{p=1}^r\phi_{pn}(\Cluster)\psi_{pj}(\Cluster)\quad
	\text{if $I_j\in\Cluster\in\Far(\Leaf_n)$,}
\end{equation}
so that
\begin{equation}\label{eq: fastsum}
\sum_{j=1}^{n-1}\betatilde_{nj}V^j=\Sigma_n(\Leaf_n,V)
	+\sum_{\Cluster\in\Near(\Leaf_n)}\Sigma_n(\Cluster,V)
	+\sum_{\Cluster\in\Far(\Leaf_n)}\Sigmatilde_n(\Cluster,V),
\end{equation}
where
\[
\Sigma_n(\Cluster,V)=\sum_{\substack{I_j\in\Cluster\\ j\le n-1}}
	\beta_{nj}V^j
\quad\text{and}\quad
\Sigmatilde_n(\Cluster,V)=\sum_{I_j\in\Cluster}\betatilde_{nj}V^j.
\]
To evaluate $\Sigmatilde_n(\Cluster,V)$, we compute
\[
\Sigmatilde_n(\Cluster,V)=\sum_{p=1}^r\phi_{pn}(\Cluster)\Psi_p(\Cluster,V)
\quad\text{where}\quad
\Psi_p(\Cluster,V)=\sum_{I_j\in\Cluster}\psi_{pj}(\Cluster) V^j.
\]

The results of Sections \ref{sec: DG method}~and \ref{sec: Low-rank} 
now yield the following estimate for the additional error incurred by 
using the approximating sum~\eqref{eq: fastsum}.  Recall that $\rho_\nu$ 
is the constant appearing in the lower bound~\eqref{eq: lower}.

\begin{theorem}\label{thm: error}
Define $\betatilde_{nj}$ according to Theorem~\ref{thm: Taylor expansion}, 
\eqref{eq: beta near}~and \eqref{eq: beta far}, and put 
$\kmin=\min_{1\le n\le N}k_n$.
The perturbed problem~\eqref{eq: perturbed} is stable if
\[
(r+1)(\eta/2)^r\le2^{\nu-2}\Gamma(\nu+1)\rho_\nu(\kmin/T)^{1-\nu},
\]
in which case the error estimate~\eqref{eq: time error} for the
semidiscrete DG method holds with
\[
\epsilon_n=\frac{2^{2-\nu}}{\Gamma(\nu+1)}\,(r+1)(\eta/2)^r nk^\nu
	\quad\text{for $1\le n\le N$.}
\]
\end{theorem}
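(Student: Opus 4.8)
The plan is to reduce everything to the per-weight error bound of Theorem~\ref{thm: Taylor expansion} and then feed the result into Corollary~\ref{cor: stability} and Theorem~\ref{thm: accuracy}. First I would establish the uniform estimate
\[
|\betatilde_{nj}-\beta_{nj}|\le 2^{2-\nu}(r+1)(\eta/2)^r\beta_{nj}
	\quad\text{for all $1\le j\le n-1$.}
\]
On the near part---when $I_j\in\Leaf_n$ or $I_j\in\Cluster\in\Near(\Leaf_n)$---definition~\eqref{eq: beta near} makes the left-hand side vanish, so the bound holds trivially because $\beta_{nj}>0$; on the far part the $(\Leaf_n,\eta)$-admissibility of $\Cluster$ guarantees that~\eqref{eq: a b c d admissible} is met, so Theorem~\ref{thm: Taylor expansion} applies verbatim. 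Writing $C_r=2^{2-\nu}(r+1)(\eta/2)^r$ for brevity, this gives $|\betatilde_{nj}-\beta_{nj}|\le C_r\beta_{nj}$ throughout.

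Next I would verify the two hypotheses of Corollary~\ref{cor: stability}. The key ingredient is that both weight sums telescope when written in the integral form~\eqref{eq: beta_nj}. Summing over $j$ collapses the intermediate terms to
\[
\sum_{j=1}^{n-1}\beta_{nj}=\omega_{1+\nu}(t_{n-1})-\omega_{1+\nu}(t_n)+\omega_{1+\nu}(k_n)\le\frac{k_n^\nu}{\Gamma(1+\nu)},
\]
where the inequality uses that $\omega_{1+\nu}$ is increasing, so the first two terms are nonpositive and may be discarded; an identical computation summing over $n$ yields $\sum_{n=j+1}^N\beta_{nj}\le k_j^\nu/\Gamma(1+\nu)$. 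Multiplying by $C_r$, both requirements of Corollary~\ref{cor: stability} reduce to $C_r\le\rho_\nu T^{\nu-1}\Gamma(1+\nu)k^{1-\nu}$ for the relevant step length; since $1-\nu>0$ the worst case is the smallest step $\kmin$, and rearranging gives exactly the stated hypothesis $(r+1)(\eta/2)^r\le2^{\nu-2}\Gamma(\nu+1)\rho_\nu(\kmin/T)^{1-\nu}$. This establishes stability.

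Finally, for the error estimate I would apply Theorem~\ref{thm: accuracy} with $\epsilon_n$ taken to be the double sum on its left-hand side. Using the uniform bound and then the first telescoping estimate,
\[
\sum_{j=1}^n\sum_{l=1}^{j-1}|\betatilde_{jl}-\beta_{jl}|\le C_r\sum_{j=1}^n\sum_{l=1}^{j-1}\beta_{jl}\le\frac{C_r}{\Gamma(1+\nu)}\sum_{j=1}^nk_j^\nu,
\]
and the quasiuniformity assumption $k_j\le k$ bounds the last sum by $nk^\nu$, yielding the claimed $\epsilon_n=\tfrac{2^{2-\nu}}{\Gamma(\nu+1)}(r+1)(\eta/2)^r nk^\nu$; substituting into~\eqref{eq: time error} completes the proof. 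The only genuinely nonroutine step is spotting the telescoping structure of the two weight sums and correctly tracking the sign of $\omega_{1+\nu}(t_{n-1})-\omega_{1+\nu}(t_n)$ so that it can be dropped. Once these two bounds are in hand, the theorem is simply an assembly of Theorems~\ref{thm: Taylor expansion} and~\ref{thm: accuracy} together with Corollary~\ref{cor: stability}.
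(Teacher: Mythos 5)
Your proposal is correct and follows essentially the same route as the paper: the relative bound from Theorem~\ref{thm: Taylor expansion} (with exact near-field weights), the closed-form evaluation of the two weight sums $\sum_{j<n}\beta_{nj}\le k_n^\nu/\Gamma(1+\nu)$ and $\sum_{n>j}\beta_{nj}\le k_j^\nu/\Gamma(1+\nu)$ (your telescoping of $\omega_{1+\nu}$ is the same computation the paper performs by direct integration), and then assembly via Corollary~\ref{cor: stability} and Theorem~\ref{thm: accuracy}. Your explicit treatment of the near-field case and of the worst case $k_n=\kmin$ just makes visible steps the paper leaves implicit.
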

\begin{proof}
Recalling \eqref{eq: beta_nj}, we have
\begin{align*}
\sum_{j=1}^{n-1}\beta_{nj}&=\int_0^{t_{n-1}}\biggl(
	\frac{(t_{n-1}-s)^{\nu-1}}{\Gamma(\nu)}
	-\frac{(t_n-s)^{\nu-1}}{\Gamma(\nu)}\biggr)\,ds\\
	&=\frac{k_n^\nu-\bigl(t_n^\nu-t_{n-1}^\nu\bigr)}{\Gamma(\nu+1)}
	\le\frac{k_n^\nu}{\Gamma(\nu+1)}
\end{align*}
and
\begin{align*}
\sum_{n=j+1}^N\beta_{nj}&=\int_{t_{j-1}}^{t_j}
	\frac{(t_j-s)^{\nu-1}-(T-s)^{\nu-1}}{\Gamma(\nu)}\,ds\\
	&=\frac{k_j^\nu-[(T-t_{j-1})^\nu-(T-t_j)^\nu]}{\Gamma(\nu+1)}
	\le\frac{k_j^\nu}{\Gamma(\nu+1)}.
\end{align*}
Thus, by Theorem~\ref{thm: Taylor expansion},
\[
\sum_{j=1}^{n-1}\bigl|\betatilde_{nj}-\beta_{nj}\bigr|
	\le2^{2-\nu}(r+1)(\eta/2)^r\,\frac{k_n^\nu}{\Gamma(\nu+1)}
\]
and
\[
\sum_{n=j+1}^N\bigl|\betatilde_{nj}-\beta_{nj}\bigr|
	\le2^{2-\nu}(r+1)(\eta/2)^r\,\frac{k_j^\nu}{\Gamma(\nu+1)}.
\]
Therefore, Corollary~\ref{cor: stability} shows that the perturbed
scheme is stable if
\[
2^{2-\nu}(r+1)(\eta/2)^r\,\frac{k_n^\nu}{\Gamma(\nu+1)}
	\le\rho_\nu T^{\nu-1}k_n\quad\text{for $1\le n\le N$,}
\]
and since
\[
\sum_{j=1}^n\sum_{l=1}^{j-1}|\betatilde_{nj}-\beta_nj|\le2^{2-\nu}
	(r+1)(\eta/2)^r\sum_{j=1}^n\frac{k_j^\nu}{\Gamma(\nu+1)},
\]
the error bound follows at once from Theorem~\ref{thm: accuracy}.
\end{proof}

Since $N^{-1}\le C\kmin/T$ because the mesh is quasiuniform, we have
stability if $(r+1)(\eta/2)^r\le CN^{\nu-1}$, 
and the error factor satisfies
\begin{equation}\label{eq: consistent error}
\epsilon_n=O(k)\quad\text{if}\quad
(r+1)(\eta/2)^r\le CN^{\nu-2}.
\end{equation}

\ifpreprint
The alternative expansion from Corollary~\ref{cor: phi psi star} gives
$\phi_{pn}^\star(\Cluster)$~and $\psi_{pj}^\star$ such that
\[
\betatilde_{nj}=\sum_{p=1}^r\phi_{pn}^\star(\Cluster)\psi_{pj}^\star\quad
	\text{if $I_j\in\Cluster\in\Far(\Leaf_n)$,}
\]
so
\[
\Sigmatilde_n(\Cluster,V)=\sum_{p=1}^r\phi_{pn}^\star(\Cluster)
	\Psi_p^\star(\Cluster,V)
\quad\text{where}\quad
\Psi_p^\star(\Cluster,V)=\sum_{I_j\in\Cluster}\psi_{pj}^\star V^j.
\]
If $\Cluster$ is not a leaf, then $\Cluster$ is the union of its
children, so
\[
\Psi_p^\star(\Cluster,V)=\sum_{\Cluster'\in\Children(\Cluster)}
	\Psi_p^\star(\Cluster',V).
\]
Thus, once we compute $\Psi_p^\star(\Cluster,V)$ for every
leaf~$\Cluster$ of~$\Tree$, we can compute $\Psi_p^\star(\Cluster,V)$
for the remaining nodes~$\Cluster$ by aggregation.
\fi
\section{Computational cost}
\label{sec: cost}
We now seek to estimate the number of operations required to evaluate the
right-hand side of~\eqref{eq: fastsum}.  Let $\Gen(\Cluster)$ denote
the \emph{generation} of the node~$\Cluster\in\Tree$, defined recursively by 
\[
\Gen\bigl(\Cluster)=0\quad\text{if $\Cluster=\Cluster(1,N)$,}
\]
and
\[
\Gen(\Cluster)=\Gen\bigl(\Parent(\Cluster)\bigr)+1\quad
	\text{if $\Cluster\ne\Cluster(1,N)$.}
\]
Put
\[
\MinC_\ell(\Leaf)=\{\,\Cluster\in\MinC(\Leaf):\Gen(\Cluster)=\ell\,\},
\]
and note the $\MinC_0(\Leaf)=\emptyset$ since the root node of
the cluster tree cannot be $(\Leaf,\eta)$-admissible.
We formlate the following regularity condition for the cluster tree.

\begin{definition}
For integers $G\ge1$ and $Q\ge2$, we say that $\Tree$ is 
\emph{$(G,Q)$-uniform} if, for some constants $0<\lambda<\Lambda$ and
for every node~$\Cluster\in\Tree$,
\begin{enumerate}
\item
$0\le\Gen(\Cluster)\le G$;
\item
$\#\Children(\Cluster)=Q$ whenever $\Cluster\notin\Leaves$;
\item
$\lambda TQ^{-\ell}\le\Len(\Cluster)\le\Lambda TQ^{-\ell}$
whenever $\Cluster\in\G_\ell$;
\item
$\bigcup\G_\ell=[0,T]$ for $0\le\ell\le G$.
\end{enumerate}
\end{definition}

For example, given a uniform mesh with $N=2^P$~subintervals and $G\le P$, 
recursive bisection of~$[0,T]$ for $G$~generations leads to a 
$(G,2)$-uniform cluster tree in which each leaf contains 
$N/2^G=2^{P-G}$~subintervals.

We assume henceforth that $\Tree$ is $(G,Q)$-uniform.
Since $\#\G_0=1$ and $\#\G_{\ell+1}=Q\times\#\G_\ell$, we see that
$\#\G_\ell=Q^\ell$, implying that
\begin{equation}\label{eq: number of nodes}
\#\Tree=\sum_{\ell=0}^GQ^\ell=\frac{Q^{G+1}-1}{Q-1}\le2(\#\Leaves)-1.
\end{equation}
Also, $\Leaves=\G_G$ so
\[
Q^G=\#\G_G=\#\Leaves.
\]
The next result shows that $\#\MinC(\Leaf)=O(\eta^{-1}QG)$.

\begin{lemma}\label{lem: MinC count}
Suppose that $\Leaf\in\Leaves$ and $\Cluster\in\MinC_\ell(\Leaf)$.
If $\bigcup\Cluster=(a,b]$ and $\bigcup\Leaf=(c,d]$, then
\[
c-(1+\eta^{-1})\Lambda TQ^{-\ell+1}<a<b\le c -\eta^{-1}\lambda TQ^{-\ell}
\quad\text{when $1\le\ell\le G-1$,}
\]
whereas
\[
c-(1+\eta^{-1})\Lambda TQ^{-G}<a<b\le c
\quad\text{when $\ell=G$.}
\]
Therefore,
\[
\#\MinC_\ell(\Leaf)\le\frac{\Lambda}{\lambda}\times\begin{cases}
	(1+\eta^{-1})Q&\text{for $0\le\ell\le G$,}\\
	(1+\eta^{-1})&\text{for $\ell=G$.}
\end{cases}
\]
\end{lemma}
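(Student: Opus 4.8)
The plan is to first locate, for a node $\Cluster\in\MinC_\ell(\Leaf)$, the endpoints of the underlying interval $\bigcup\Cluster=(a,b]$, and then read off the cardinality bounds by a packing argument. Throughout I write $\bigcup\Leaf=(c,d]$, so that $\History(\Leaf)=(0,c]$ and $\Dist(\Cluster,\Leaf)=c-b$ for any $\Cluster$ lying in the history. Since every node of $\MinC(\Leaf)$ satisfies $\bigcup\Cluster\subseteq\History(\Leaf)$, the bound $b\le c$ is immediate, which already gives the upper endpoint estimate in the case $\ell=G$. For the sharper upper bound when $1\le\ell\le G-1$, I would note that such a $\Cluster$ cannot be a leaf (because $\Leaves=\G_G$), so by the final clause in the definition of an admissible cover it must be $(\Leaf,\eta)$-admissible; then $\Len(\Cluster)\le\eta\Dist(\Cluster,\Leaf)$ together with the lower length bound $\Len(\Cluster)\ge\lambda TQ^{-\ell}$ supplied by $(G,Q)$-uniformity forces $c-b\ge\eta^{-1}\lambda TQ^{-\ell}$, that is, $b\le c-\eta^{-1}\lambda TQ^{-\ell}$.

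The crux is the lower bound on $a$, and here I would exploit the minimality of $\MinC(\Leaf)$ as generated by $\divideproc$. Because $\Gen(\Cluster)=\ell\ge1$, the recursion reached $\Cluster$ only by descending through its parent $P=\Parent(\Cluster)$, so the call $\divideproc(P,\dots)$ must have entered its final for-loop; since $P$ is not a leaf, this happens exactly when $P$ fails the admissibility test, i.e. when either $\bigcup P$ protrudes past $c$ or $P$ is not $(\Leaf,\eta)$-admissible. Writing $\bigcup P=(a',b']$, note $a'\le a$ and $\Len(P)\le\Lambda TQ^{-\ell+1}$. In the non-admissible case $\Len(P)>\eta\Dist(P,\Leaf)=\eta(c-b')$ gives $c-b'<\eta^{-1}\Lambda TQ^{-\ell+1}$, and the protruding case $b'>c$ gives this a fortiori; combining with $a'=b'-\Len(P)$ yields $a\ge a'>c-(1+\eta^{-1})\Lambda TQ^{-\ell+1}$, the required lower bound for $1\le\ell\le G-1$. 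For $\ell=G$ the node is a leaf, and to reach the sharper factor $Q^{-G}$ I would instead argue from the non-admissibility of $\Cluster$ itself: a non-admissible leaf obeys $c-b<\eta^{-1}\Len(\Cluster)\le\eta^{-1}\Lambda TQ^{-G}$, whence $c-a=(c-b)+\Len(\Cluster)<(1+\eta^{-1})\Lambda TQ^{-G}$.

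Finally I would convert these endpoint bounds into counts by packing disjoint intervals. The members of $\MinC_\ell(\Leaf)$ are pairwise disjoint by the disjointness clause in the definition of an admissible cover, each underlies an interval of length at least $\lambda TQ^{-\ell}$, and by the two endpoint estimates all of them lie in a window of length $(1+\eta^{-1})\Lambda TQ^{-\ell+1}-\eta^{-1}\lambda TQ^{-\ell}$ when $1\le\ell\le G-1$, respectively $(1+\eta^{-1})\Lambda TQ^{-G}$ when $\ell=G$. Dividing the window length by the minimal interval length gives $\#\MinC_\ell(\Leaf)\le(\Lambda/\lambda)(1+\eta^{-1})Q$ and $\#\MinC_G(\Leaf)\le(\Lambda/\lambda)(1+\eta^{-1})$ respectively. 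I expect the lower bound on $a$ to be the main obstacle: one must read off cleanly from the control flow of $\divideproc$ that the parent was genuinely subdivided, unify the protrusion and non-admissibility cases into one distance estimate, and, most easily mishandled, apply the generation-dependent length bounds at the correct generation, namely $\ell-1$ for $P$ and $\ell$ for $\Cluster$.
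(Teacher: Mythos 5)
Your treatment of $1\le\ell\le G-1$, and the final packing step, are essentially the paper's own proof. The paper gets the lower bound on $a$ by contradiction (if $a\le c-(1+\eta^{-1})\Lambda TQ^{-\ell+1}$, then $\Parent(\Cluster)$ would be $(\Leaf,\eta)$-admissible, which is impossible for a member of $\MinC(\Leaf)$), whereas you run the same estimate forwards from the observation that \divideproc{} descended through the parent only because the parent failed the admissibility test; the ingredients --- the length bounds applied at generation $\ell-1$ for the parent and $\ell$ for $\Cluster$, the unification of the ``protrudes past $c$'' and ``too long'' cases, and the disjointness/packing count --- are identical, so this part is sound.

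The case $\ell=G$ contains a genuine gap: you treat every generation-$G$ member of $\MinC(\Leaf)$ as a \emph{non-admissible} leaf, and your bound $c-a<(1+\eta^{-1})\Lambda TQ^{-G}$ uses non-admissibility essentially. Nothing in the construction forces this: Algorithm~\ref{alg: divide} adds a leaf to the cover as soon as it reaches an admissible one, and it reaches leaves whenever their \emph{parent} fails the test. Concretely, take a uniform mesh with $T=N=16$, $Q=2$, $G=4$ (unit-length leaves, so $\Lambda/\lambda$ can be taken arbitrarily close to $1$), $\eta=1$, and $\Leaf=(11,12]$, so $c=11$: the parent $(8,10]$ has length $2$ and distance $1$ from $\Leaf$, hence fails the test and is subdivided, while its child $(8,9]$ has length $1$ and distance $2$, hence is admissible and enters $\MinC(\Leaf)$. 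Then $a=8$ violates $a>c-(1+\eta^{-1})\Lambda TQ^{-G}\approx 9$, and indeed $\MinC_G(\Leaf)=\{(8,9],(9,10],(10,11]\}$ has three elements, exceeding the claimed bound $(\Lambda/\lambda)(1+\eta^{-1})\approx 2$; so your step does not merely lack justification --- the sharper $\ell=G$ assertion is false as stated. You are in good company: the paper's own proof makes the identical hidden assumption, disguised as ``so $\Cluster$ is $(\Leaf,\eta)$-admissible, which is impossible because $\Cluster$ is a leaf'' (admissible leaves in $\MinC(\Leaf)$ are perfectly possible, and belong to $\Far(\Leaf)$, not $\Near(\Leaf)$). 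What is true for $\ell=G$ is exactly what your parent argument --- valid for every $\ell\ge1$ --- already yields: $c-(1+\eta^{-1})\Lambda TQ^{-G+1}<a<b\le c$, hence $\#\MinC_G(\Leaf)\le(\Lambda/\lambda)(1+\eta^{-1})Q$, i.e.\ the first alternative of the count (whose stated range $0\le\ell\le G$ already covers this case). That weaker bound is all that the proofs of Theorems \ref{thm: operations} and~\ref{thm: active memory} genuinely need, at the cost of at most an extra factor $Q$ in the near-field term.
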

\begin{proof}
If $1\le\ell\le G-1$, then $\Cluster$ is $(\Leaf,\eta)$-admissible so
\[
\lambda TQ^{-\ell}\le\Len(\Cluster)=b-a\le\eta\Dist(\Cluster,\Leaf)
	=\eta(c-b)
\]
and thus $b\le c-\eta^{-1}\lambda TQ^{-\ell}$.  Suppose for a contradiction
that $a\le c-\Lambda T(1+\eta^{-1})Q^{-\ell+1}$, and let
$\Cluster'=\Parent(\Cluster)$.  If $\bigcup\Cluster'=(a',b']$, then
$a'\le a<b\le b'$ and $\Gen(\Cluster')=\ell-1$, so
\[
b'\le a'+\Len(\Cluster')\le a+\Lambda TQ^{-\ell+1}
	\le c-\eta^{-1}\Lambda TQ^{-\ell+1}
\]
and
\[
\Len(\Cluster')\le\Lambda TQ^{-\ell+1}\le\eta(c-b')
	=\eta\Dist(\Cluster',\Leaf),
\]
showing that $\Cluster'$ is $(\Leaf,\eta)$-admissible, which is impossible
because $\Cluster\in\MinC(\Leaf)$.  Thus,
\[
\#\MinC_\ell(\Leaf)\times\lambda TQ^{-\ell}
	\le\sum_{\Cluster\in\MinC_\ell(\Leaf)}\Len(\Cluster)
	<(1+\eta^{-1})\Lambda TQ^{-\ell+1}-\eta^{-1}\lambda TQ^{-\ell}
\]
and $\#\MinC_\ell(\Leaf)<(1+\eta^{-1})(\Lambda/\lambda)Q-\eta^{-1}$.

Now let $\ell=G$ and suppose for a contradiction that 
$a\le c-(1+\eta^{-1})\Lambda TQ^{-G}$.  Since
\[
b\le a+\Len(\Cluster)\le a+\Lambda TQ^{-G}\le c-\eta^{-1}\Lambda TQ^{-G}
\]
it follows that
\[
\Len(\Cluster)=b-a\le\eta\Lambda TQ^{-G}\le\eta(c-b)
	=\eta\Dist(\Cluster,\Leaf),
\]
so $\Cluster$ is $(\Leaf,\eta)$-admissible, which is impossible because
$\Cluster$ is a leaf.  Thus,
\[
\#\MinC_G(\Leaf)\times\lambda TQ^{-G}\le\sum_{\Cluster\in\MinC_G(\Leaf)}
	\Len(\Cluster)\le(1+\eta^{-1})\Lambda TQ^{-G}
\]
and $\#\MinC_G(\Leaf)\le(1+\eta^{-1})(\Lambda/\lambda)$.
\end{proof}

As a straight forward consequence, we obtain the desired operation counts.

\begin{theorem}\label{thm: operations}
If $\Tree$ is $(G,Q)$-uniform, then the right-hand side 
of~\eqref{eq: fastsum} can be computed for~$1\le n\le N$ in
order $r\eta^{-1}MN(QG+NQ^{-G})$ operations.
\end{theorem}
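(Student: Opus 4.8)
The plan is to group the three contributions to the right-hand side of~\eqref{eq: fastsum} according to how each is evaluated, bound the work each generates during a single time step~$n$, and then sum over $1\le n\le N$. The diagonal leaf sum~$\Sigma_n(\Leaf_n,V)$ and the near sums~$\Sigma_n(\Cluster,V)$ for $\Cluster\in\Near(\Leaf_n)$ are evaluated directly using the exact weights~$\beta_{nj}$, so each subinterval in such a cluster costs one scalar--vector operation, that is, $O(M)$ operations (the $O(1)$ weight evaluation being dominated). The far sums are evaluated through the low-rank form $\Sigmatilde_n(\Cluster,V)=\sum_{p=1}^r\phi_{pn}(\Cluster)\Psi_p(\Cluster,V)$, and here the crucial structural observation is that $\Psi_p(\Cluster,V)$ depends on the node~$\Cluster$ and on~$V$ but \emph{not} on~$n$, since $\psi_{pj}(\Cluster)$ is computed from the midpoint of~$\bigcup\Cluster$ alone. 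I would therefore treat the far work as a one-off precomputation of the quantities~$\Psi_p(\Cluster,V)$ plus a per-step aggregation against the coefficients~$\phi_{pn}(\Cluster)$.

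For the direct part, I would first note that the quasiuniform mesh together with $(G,Q)$-uniformity forces every leaf to contain $O(NQ^{-G})$ subintervals: a leaf has length at most~$\Lambda TQ^{-G}$ while each time step has length at least~$\kmin\ge cT/N$. Hence $\Sigma_n(\Leaf_n,V)$ costs $O(MNQ^{-G})$. Since the members of~$\Near(\Leaf_n)$ are precisely the non-admissible clusters of~$\MinC(\Leaf_n)$, cover property~(4) forces them to be leaves, so they lie in generation~$G$ and Lemma~\ref{lem: MinC count} bounds their number by $(1+\eta^{-1})(\Lambda/\lambda)=O(\eta^{-1})$. Each such leaf again contributes $O(MNQ^{-G})$, so the total direct work at step~$n$ is $O(\eta^{-1}MNQ^{-G})$, and summing over~$n$ gives $O(\eta^{-1}MN^2Q^{-G})$.

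For the far part, I would bound the precomputation by an incidence count. Because the clusters of each generation partition $[0,T]$ and $\bigcup\G_\ell=[0,T]$, each interval~$I_j$ lies in exactly one node of each generation, hence in $G+1$ nodes of~$\Tree$ in total; summing the number of contained subintervals over all nodes therefore gives $(G+1)N$. Forming $\Psi_p(\Cluster,V)$ for every node and every $p=1,\dots,r$ thus costs $O(rMNG)$ operations in all, performed once. The per-step aggregation costs $O(rM)$ for each far cluster, and since $\#\Far(\Leaf_n)\le\#\MinC(\Leaf_n)=\sum_{\ell=1}^G\#\MinC_\ell(\Leaf_n)$, the generation-wise estimate of Lemma~\ref{lem: MinC count} gives $\#\MinC(\Leaf_n)\le G(1+\eta^{-1})(\Lambda/\lambda)Q=O(\eta^{-1}QG)$. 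Hence the aggregation costs $O(r\eta^{-1}QGM)$ per step and $O(r\eta^{-1}MNQG)$ over all steps.

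Adding the three totals and using $r\ge1$, $Q\ge2$, $\eta\le1$ to absorb $O(\eta^{-1}MN^2Q^{-G})$ into the $NQ^{-G}$ term and $O(rMNG)$ into the $QG$ term yields the claimed bound $O(r\eta^{-1}MN(QG+NQ^{-G}))$. The main obstacle is conceptual rather than computational: one must recognize that $\Psi_p(\Cluster,V)$ is independent of~$n$ and so may be assembled once, since recomputing it for each far cluster at each time step would instead cost $O(r\eta^{-1}MN^2)$, worse by a factor~$Q^G$. The only genuinely quantitative step is the summation of the per-generation cardinality bound from Lemma~\ref{lem: MinC count}, which is what converts the $O(\eta^{-1}Q)$ clusters per generation into the factor~$QG$.
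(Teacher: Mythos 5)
Your proposal is correct and follows essentially the same route as the paper's proof: bound the near-field (leaf) sums using Lemma~\ref{lem: MinC count} at generation~$G$, bound the number of far clusters by summing Lemma~\ref{lem: MinC count} over generations to get $O(\eta^{-1}QG)$ at cost $O(rM)$ each, and charge the assembly of the $n$-independent sums $\Psi_p(\Cluster,V)$ once per generation via the partition property, giving $O(rGNM)$ in total. Your added justifications --- that near clusters must be leaves by cover property~(4), and that quasiuniformity gives $O(NQ^{-G})$ subintervals per leaf --- merely make explicit what the paper's proof uses implicitly.
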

\begin{proof}
If $\Gen(\Cluster)=\ell$, then the number of subintervals in~$\Cluster$
is $NQ^{-\ell}$ so computing $\Sigma_n(\Cluster)$ requires
$O(NQ^{-\ell}M)$~operations.  Since 
\[
\#\Near(\Leaf_n)=\#\MinC_G(\Leaf_n)=O(\eta^{-1})
\]
and $\ell=G$ whenever $\Cluster\in\Near(\Leaf_n)$, we see that the 
total cost for all of the near-field sums is $O(\eta^{-1}NQ^{-G}M)$.
The sum~$\Sigmatilde_n(\Cluster,V)$ costs $O(rM)$~operations, and since
\[
\#\Far(\Leaf_n)=\sum_{\ell=1}^{G-1}\#\MinC_\ell(\Leaf_n)=O(\eta^{-1}QG)
\]
the total cost for all of the far-field sums is 
$O(\eta^{-1}QGrM)$~operations.  In addition, 
computing $\Psi_p(\Cluster,V)$ for every~$\Cluster$ 
with~$\Gen(\Cluster)=\ell$ costs $O(NM)$~operations, so computing this sum for
$1\le p\le r$ and $0\le\ell\le G-1$ costs $O(rGNM)$~operations.  
Thus, the overall cost for $N$~time
steps is of order $N\times(\eta^{-1}Q^{-G}NM+\eta^{-1}rQGM)+rGNM$~operations.
\end{proof}

If we choose $G=P=\log_QN$ so that $N=Q^G$ and each leaf contains only a single
subinterval, then the cost is $O(r\eta^{-1}QMN\log N)$, as claimed in
the Introduction.  In practice, the overheads associated with the 
tree data structure mean that it may be more efficient to choose $G<P$.
\section{Memory management}
\label{sec: memory}

From~\eqref{eq: number of nodes} we have
$\#(\Tree\setminus\Leaves)\le Q^G$, which implies that to store
$\Psi_p(\Cluster,\Utilde)$ for all $\Cluster\in\Tree\setminus\Leaves$~and 
$1\le p\le r$ we require $O(rQ^GM)$~memory locations.  Storing $\Utilde^n$
for~$1\le n\le N$ requires a further $O(NM)$~locations.  However, at the $n$th
time step only a small fraction of this memory is active, in the sense that
the data it holds play a role in computing~$\Utilde^n$.  
Figure~\ref{fig: clusters} illustrates a $(Q,G)$-uniform cluster tree with
$Q=2$~and $G=6$.  The black cluster is the current leaf~$\Leaf_n$, and the
red clusters belong to the minimal $(\Leaf_n,\eta)$-admissible 
cover~$\MinC(\Leaf_n)$.  As we will now explain, memory associated with 
the green and red clusters is active, whereas that associated with the 
blue and magenta clusters is not.  

\begin{figure}
\begin{center}
\includegraphics[scale=0.75, clip=true, trim=0mm 15mm 0mm 15mm]{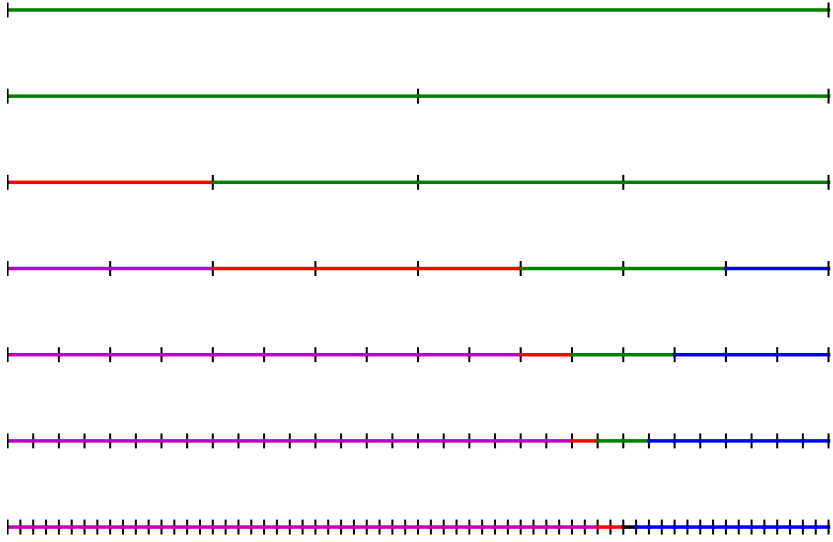}
\end{center}
\caption{Cluster tree with $\Leaf_n$ shown in black, the minimal
admissible cover~$\MinC(\Leaf_n)$ in red and the other active clusters
in green.  The blue clusters are not yet active and the magenta clusters
are no longer active.}
\label{fig: clusters}
\end{figure}

For each cluster~$\Cluster\in\Tree$, either there is no~$n$
such that $\Cluster\in\MinC(\Leaf_n)$, or else there is
a unique smallest~$n=\nmin(\Cluster)$ such that $\Cluster\in\MinC(\Leaf_n)$.
Moreover, if $\Cluster\notin\MinC(\Leaf_n)$ for some $n>\nmin(\Cluster)$,
then an ancestor of~$\Cluster$ must belong to~$\MinC(\Leaf_n)$
and we have $\Cluster\notin\MinC(\Leaf_{n'})$ for every $n'>n$.
Hence, there is also a unique $n=\nmax(\Cluster)$ such that 
\[
\Cluster\in\MinC(\Leaf_n)
\quad\text{if and only if}\quad
\nmin(\Cluster)\le n\le\nmax(\Cluster),
\]
so, if $\Cluster$ is not a leaf, the sums
\[
\Psi(\Cluster,\Utilde)=\Psi(\Cluster)
	=[\Psi_1(\Cluster),\Psi_2(\Cluster),\ldots, \Psi_r(\Cluster)]
\]
contribute to the far-field sum~$\Sigmatilde_n(\Cluster,\Utilde)$ if
and only if $\nmin(\Cluster)\le n\le\nmax(\Cluster)$.  We can therefore
deallocate the $O(rM)$~memory locations used to store~$\Psi(\Cluster)$
once $\Utilde^n$ has been computed for~$n=\nmax(\Cluster)$.

For each~$n$, define a subtree 
\[
\Tree_n=\{\,\Cluster\in\Tree\setminus\Leaves:
	\text{$\bigcup\Cluster$ intersects $I_n$}\,\},
\]
so that $\Psi_p(\Cluster,\Utilde)$ includes a term in~$\Utilde^n$
if and only if $\Cluster\in\Tree_n$.  In Algorithm~\ref{alg: time stepping},
after computing $\Utilde^n$ we update all far-field sums~$\Psi_p(\Cluster)$ 
with~$\Cluster\in\Tree_n$, so that $\Utilde^n$ is subsequently needed only 
for computing near-field sums.  In this way, we can deallocate the
$O(NQ^{-G}M)$~memory locations used to store $U^j$ for $I_j\in\Leaf$
once $\Utilde^n$ has been computed for~$n=\nmax(\Leaf)$.
Algorithm~\ref{alg: free} defines a recursive procedure $\free(\Cluster,V)$ 
that deallocates the memory associated with the children of~$\Cluster$, 
and with their descendants if not already deallocated.

\begin{algorithm}
\caption{Time stepping and memory management.}
\label{alg: time stepping}
\begin{algorithmic}
\FOR{$n=1$ to $N$}
  \STATE Find $\MinC(\Leaf_n)$ using Algorithm~\ref{alg: divide}
  \FORALL{$\Cluster\in\MinC(\Leaf_n)\setminus\Leaves$}
    \FORALL{$\Cluster'$ a child of $\Cluster$}
      \STATE $\free(\Cluster',\Utilde)$
    \ENDFOR
  \ENDFOR
  \STATE Compute $\Btilde^n\Utilde$ using \eqref{eq: fastsum} 
  \STATE Allocate $\Utilde^n$ and solve \eqref{eq: perturbed} 
  \STATE Write $\Utilde^n$ to disk
  \FORALL{$\Cluster\in\Tree_n$}
    \IF{$\Psi(\Cluster)$ is not allocated}
      \STATE Allocate $\Psi(\Cluster)$ and initialize to 0.
    \ENDIF
    \FORALL{$p\in\{1,2,\ldots,r\}$}
      \STATE $\Psi_p(\Cluster)=\Psi_p(\Cluster)+\psi_{pn}\Utilde^n$
    \ENDFOR
  \ENDFOR
\ENDFOR
\end{algorithmic}
\end{algorithm}

\begin{algorithm}
\caption{$\free(\Cluster, V)$}
\label{alg: free}
\begin{algorithmic}
\IF{$\Cluster\in\Leaves$}
  \FORALL{$I_j\in\Cluster$}
    \STATE{Deallocate $V^j$}
  \ENDFOR
\ELSIF{$\Psi(\Cluster)$ is allocated}
  \FORALL{$\Cluster'$ a child of $\Cluster$}
    \STATE $\free(\Cluster',V)$
  \ENDFOR
  \STATE Deallocate $\Psi(\Cluster)$
\ENDIF
\end{algorithmic}
\end{algorithm}

\begin{theorem}\label{thm: active memory}
The number of active memory locations used during the execution of
Algorithm~\ref{alg: time stepping} is never more than $O(r\eta^{-1}QGM)$.
\end{theorem}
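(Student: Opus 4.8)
The plan is to bound the active memory at a fixed time step $n$ by counting separately the active storage for the far-field sums $\Psi(\Cluster)$ and the active storage for the solution vectors $\Utilde^j$ held in memory (as opposed to on disk). The total active memory is the sum of these two contributions, and I will show each is $O(r\eta^{-1}QGM)$.

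**First I would** characterize which far-field sums are active at step $n$. A cluster $\Cluster$ has its $O(rM)$-location array $\Psi(\Cluster)$ allocated and contributing precisely when $\nmin(\Cluster)\le n\le\nmax(\Cluster)$, which by the discussion preceding the theorem is equivalent to $\Cluster\in\MinC(\Leaf_{n'})$ for some $n'$ in that range. The key observation is that the set of clusters with an allocated $\Psi(\Cluster)$ at step $n$ is contained in the union $\bigcup_{\ell}$ of those nodes $\Cluster$ at each generation $\ell$ with $\nmin(\Cluster)\le n\le\nmax(\Cluster)$; because the admissibility condition forces $\Cluster$ to lie within a bounded window to the left of $\Leaf_n$, at each generation only $O(\eta^{-1}Q)$ such clusters can be simultaneously active. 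Summing the bound $\#\MinC_\ell(\Leaf)=O(\eta^{-1}Q)$ from Lemma~\ref{lem: MinC count} over the $G$ generations gives $O(\eta^{-1}QG)$ active far-field nodes, hence $O(r\eta^{-1}QGM)$ locations for the $\Psi$ arrays. The deallocation in Algorithm~\ref{alg: free}, triggered when $n$ passes $\nmax(\Cluster)$, ensures no stale $\Psi(\Cluster)$ lingers.

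**Next I would** bound the memory for the solution vectors. A vector $\Utilde^j$ with $I_j\in\Leaf$ must remain in memory until $\Utilde^n$ has been computed for $n=\nmax(\Leaf)$, after which the $\free$ procedure deallocates it. At step $n$, the leaves whose solution vectors are still active are exactly those $\Leaf$ appearing (or having appeared) in a near-field $\Near(\Leaf_{n'})$ with $n'\le n\le\nmax(\Leaf)$; by Lemma~\ref{lem: MinC count} with $\ell=G$, at most $O(\eta^{-1})$ such leaves are active at once, each holding $NQ^{-G}$ vectors of $M$ entries, for $O(\eta^{-1}NQ^{-G}M)$ locations. Since $N=Q^G$ is the natural choice (one subinterval per leaf), this is $O(\eta^{-1}M)$, dominated by the far-field bound. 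Finally, the current working set $\Utilde^n$, $\Utilde^{n-1}$ and the near/far-field accumulators add only $O(rM)$.

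**The hard part will be** making rigorous the claim that at each generation only $O(\eta^{-1}Q)$ clusters are simultaneously active, i.e. that the active window for $\Psi(\Cluster)$ across all $n$ does not stack up more copies than Lemma~\ref{lem: MinC count} allows for a single $\Leaf_n$. The subtlety is that activity is defined over the whole interval $[\nmin(\Cluster),\nmax(\Cluster)]$, not just membership in one $\MinC(\Leaf_n)$, so I must argue that if $\Cluster$ and $\Cluster'$ are distinct generation-$\ell$ clusters both active at step $n$, then both are forced into the same bounded spatial window to the left of $I_n$ dictated by the admissibility inequalities. Concretely, $\nmax(\Cluster)\ge n$ means an ancestor of $\Cluster$ becomes admissible for $\Leaf_n$ or later, which pins $\bigcup\Cluster$ to within $O((1+\eta^{-1})\Lambda TQ^{-\ell+1})$ of $t_{n-1}$; combined with the uniform length bounds this caps the count at each generation by the same $O(\eta^{-1}Q)$ as in Lemma~\ref{lem: MinC count}, and summing over $0\le\ell\le G$ closes the argument.
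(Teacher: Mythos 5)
Your overall strategy (counting active clusters generation by generation via the admissibility window of Lemma~\ref{lem: MinC count}, then charging $rM$ locations per cluster) is the same as the paper's, but it rests on a mischaracterization of the algorithm's memory lifetime, and this is a genuine gap. You assert that $\Psi(\Cluster)$ is ``allocated and contributing precisely when $\nmin(\Cluster)\le n\le\nmax(\Cluster)$.'' That is not what Algorithm~\ref{alg: time stepping} does: $\Psi(\Cluster)$ is allocated at the \emph{first} step $n'$ for which $\Cluster\in\Tree_{n'}$, i.e.\ as soon as the time stepping enters $\bigcup\Cluster$, so that the contributions $\psi_{pn'}\Utilde^{n'}$ can be accumulated; this happens long before $\Cluster$ lies in the history of the current leaf, let alone becomes admissible. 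The array then remains allocated --- first accumulating, then fully accumulated but waiting --- until an ancestor of $\Cluster$ enters a cover, which occurs only after $\nmax(\Cluster)$. Note also that, by the paper's ``if and only if'' characterization, the condition $\nmin(\Cluster)\le n\le\nmax(\Cluster)$ is literally the statement $\Cluster\in\MinC(\Leaf_n)$, so for the clusters you do count, Lemma~\ref{lem: MinC count} applies verbatim and your ``hard part'' is vacuous. What your count omits is exactly the set of green clusters in Figure~\ref{fig: clusters}: the non-leaf ancestors of $\Leaf_n$, whose $\Psi$ arrays are being updated at step $n$ and occupy $\Theta(rGM)$ locations (not the $O(rM)$ you allow for ``accumulators''), together with the fully accumulated clusters that strictly contain members of $\MinC(\Leaf_n)$ and are waiting to become admissible for a later leaf.

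The omission does not sink the theorem, because these clusters obey a similar window bound: an allocated cluster at generation $\ell$ either meets $I_n$ (so $b\le d+\Lambda TQ^{-\ell}$) or is a non-admissible cluster intersecting the history (so $a>c-(1+\eta^{-1})\Lambda TQ^{-\ell+1}$, by the same parent argument used to prove Lemma~\ref{lem: MinC count}); disjointness of same-generation clusters then caps the count at $O(\eta^{-1}Q)$ per generation. This is precisely the paper's proof: a single window
\[
c-(1+\eta^{-1})\Lambda TQ^{-\ell+1}<a<b\le d+\Lambda TQ^{-\ell},
\]
valid for \emph{every} active cluster --- red, green or black --- followed by the per-generation count. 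So the missing step is recoverable, but as written your argument bounds the memory of a different procedure from the one in Algorithm~\ref{alg: time stepping}, and repairing it amounts to supplying the paper's proof. (On one point you are more careful than the paper: the leaf solution storage $O(\eta^{-1}NQ^{-G}M)$ is genuinely not subsumed by $O(r\eta^{-1}QGM)$ unless $NQ^{-G}=O(rQG)$, e.g.\ when $G=\log_QN$; the paper's proof silently drops this term, whereas you at least flag the assumption.)
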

\begin{proof}
Suppose that the memory associated with~$\Cluster$ is active
during the $n$th time step, and that $\bigcup\Cluster=(a,b]$
and $\bigcup\Leaf_n=(c,d]$.  (So in Figure~\ref{fig: clusters}, $\Cluster$
is green or red or black.)  If $\Gen(\Cluster)=\ell$, then by 
Lemma~\ref{lem: MinC count},
\[
c-(1+\eta^{-1})\Lambda TQ^{-\ell+1}<a<b\le d+\Lambda TQ^{-\ell},
\]
and since $d-c=\Len(\Leaf_n)\le\Lambda TQ^{-G}$~and 
$\Len(\Cluster)\ge\lambda TQ^-\ell$, we see that the number of such 
clusters is at most
\begin{align*}
\frac{(d-c)+\Lambda TQ^{-\ell}[1+(1+\eta^{-1})Q]}{\lambda TQ^{-\ell}}
	&\le\frac{\Lambda}{\lambda}\Bigl(Q^{\ell-G}+1+(1+\eta^{-1})Q\Bigr)\\
	&\le \frac{\Lambda}{\lambda}(1+\eta^{-1})(Q+1).
\end{align*}
Storing $\Psi_p(\Cluster,\Utilde)$ requires $rM$~memory locations, so
the desired estimate follows after adding the contributions 
for~$1\le\ell\le G$.
\end{proof}

Since $G\le P=\log_QN$, Theorem~\ref{thm: active memory} justifies the
claim in the Introduction that the memory requirements are proportional
to~$M\log N$.  Theorems \ref{thm: operations}~and 
\ref{thm: active memory} show that --- for a given choice of $M$~and $N$ and 
a given cluster tree --- the computational cost, both with respect 
to the number of operations and to the number of active memory locations, 
is proportional to~$r/\eta$.  At the same time, by Theorem~\ref{thm: error}, 
to achieve the desired accuracy we must ensure that $(r+1)(\eta/2)^r$ 
is sufficiently small.  The next result shows the relation between 
$r$~and $\eta$ that is optimal in the sense of achieving a given accuracy 
for the least computational cost.

\begin{proposition}\label{prop: eta}
For a given~$\delta>0$, the ratio~$r/\eta$ is minimised subject
to the constraint $(r+1)(\eta/2)^r=\delta$ by choosing
\begin{equation}\label{eq: optimal eta}
\eta=2\exp\biggl(-\frac{r+2}{r+1}\biggr)	
	=2\biggl(\frac{\delta}{r+1}\biggr)^{1/r}.
\end{equation}
\end{proposition}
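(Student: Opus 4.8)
The plan is to treat $r$ as a continuous variable and use the constraint to eliminate $\eta$, which reduces the problem to a one-dimensional unconstrained minimisation. First I would solve $(r+1)(\eta/2)^r=\delta$ for $\eta$, obtaining $\eta=2(\delta/(r+1))^{1/r}$; this is exactly the second equality in~\eqref{eq: optimal eta}, and it shows that every feasible pair $(r,\eta)$ lies on a single curve parameterised by $r$. Substituting into the objective gives
\[
\frac{r}{\eta}=\frac{r}{2}\left(\frac{r+1}{\delta}\right)^{1/r}.
\]

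Next I would minimise this by working with its logarithm (which is monotone),
\[
F(r)=\log\frac{r}{\eta}=\log\frac{r}{2}+\frac{1}{r}\log\frac{r+1}{\delta}.
\]
Differentiating and clearing the positive factor $r^{-2}$, the stationarity condition $F'(r)=0$ becomes $\Phi(r)=0$, where
\[
\Phi(r)=r+\frac{r}{r+1}-\log\frac{r+1}{\delta}=\frac{r(r+2)}{r+1}-\log\frac{r+1}{\delta},
\]
using the simplification $r+r/(r+1)=r(r+2)/(r+1)$. At such a point $\log((r+1)/\delta)=r(r+2)/(r+1)$, so dividing by $r$ and recalling that the constraint gives $\log(\eta/2)=-r^{-1}\log((r+1)/\delta)$, I obtain $\log(\eta/2)=-(r+2)/(r+1)$, which is the first equality in~\eqref{eq: optimal eta}.

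Finally I would confirm that this critical point really is the minimiser. A short computation gives $\Phi'(r)=(r^2+r+1)/(r+1)^2>0$, so $\Phi=r^2F'$ is strictly increasing; in the relevant regime $0<\delta<1$ (ensured by the accuracy requirement of Theorem~\ref{thm: error}) one has $\Phi(0^+)=\log\delta<0$ and $\Phi(r)\to+\infty$, so $F'$ changes sign exactly once, from negative to positive. Hence $F$ has a unique stationary point, which is its global minimum. The only real obstacle is the bookkeeping in differentiating $F$ and the algebraic simplification of $\Phi$; once those are in hand, matching back to the constraint is immediate. A Lagrange-multiplier formulation would also work but yields messier expressions, since the constraint depends on $r$ through the exponent as well as through the factor $r+1$.
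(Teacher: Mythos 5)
Your proof is correct, but it takes a different route from the paper. The paper introduces the Lagrangian $L=r\eta^{-1}+\mu(r+1)(\eta/2)^r$ and treats $(r,\eta)$ as the two unknowns: setting $\partial L/\partial\eta=0$ gives $\mu(\eta/2)^r=\eta^{-1}/(r+1)$, and substituting this into $\partial L/\partial r=0$ yields $r+2+(r+1)\log(\eta/2)=0$ in two lines, which is precisely the first equality in~\eqref{eq: optimal eta} --- so the ``messier expressions'' you feared from the multiplier formulation do not in fact materialise. You instead eliminate $\eta$ through the constraint and minimise the single-variable function $F(r)=\log(r/2)+r^{-1}\log((r+1)/\delta)$; your algebra is sound: $r^2F'(r)=\Phi(r)=r(r+2)/(r+1)-\log((r+1)/\delta)$, $\Phi'(r)=(r^2+r+1)/(r+1)^2>0$, and matching back through the constraint recovers $\log(\eta/2)=-(r+2)/(r+1)$. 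What your approach buys that the paper's does not is a genuine optimality guarantee: the paper stops at first-order necessary conditions, whereas your monotonicity argument for $\Phi$ (together with $\Phi(0^+)=\log\delta<0$ and $\Phi(r)\to+\infty$) shows the critical point is the \emph{unique global} minimiser, at the cost of a slightly longer computation. Both arguments treat $r$ as a continuous variable, which is the tacit convention of the proposition.
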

\begin{proof}
Introducing the Lagrangian $L=r\eta^{-1}+\mu(r+1)(\eta/2)^r$, we obtain the
necessary conditions
\[
\eta^{-1}+\mu(\eta/2)^r[1+(1+r)\log(\eta/2)]=0
\quad\text{and}\quad
-r\eta^{-2}+\mu(r+1)r\eta^{r-1}2^{-r}=0,
\]
so $\mu(\eta/2)^r=\eta^{-1}/(r+1)$ and $r+2+(r+1)\log(\eta/2)=0$.
\end{proof}

Thus, we should choose successive values of~$r$ until the second 
inequality in~\eqref{eq: consistent error} holds, with~$\eta$ given
by~\eqref{eq: optimal eta}.  Since $\eta^{-1}\le e^2/2$, 
the computational cost is then proportional to~$r$.  
\section{Numerical example}
\label{sec: example}

\begin{table}
\begin{center}
\newcommand{\OS}{\phantom{0}}
\begin{tabular}{c|c|ccc}
&Slow&\multicolumn{3}{c}{Fast}\\
\hline
       $r$&    ---     &           4&           5&           6\\
    $\eta$&    ---     &      0.6024&      0.6228&      0.6378\\
     Error&   0.129E-03&   0.136E-02&   0.129E-03&   0.129E-03\\[2\jot]
     Setup&  \OS49.6\,s&  \OS0.57\,s&  \OS0.57\,s&     \OS0.62\,s\\
       RHS&    910.9\,s&    15.45\,s&    17.68\,s&       20.55\,s\\
    Solver&\OS\OS7.2\,s&  \OS6.96\,s&  \OS6.84\,s&     \OS6.68\,s\\
     Total&    967.7\,s&    22.97\,s&    25.10\,s&       27.85\,s \\
\hline
\end{tabular}
\end{center}
\caption{Performance of slow and fast methods with $N=16000$ time steps
and $M=6241$ spatial degrees of freedom.}
\label{tab: results}
\end{table}

Consider a simple test problem in $d=2$~spatial dimensions, 
with~$\nu=1/2$, $T=6$, $\Omega=(0,1)\times(0,1)$ and 
homogeneous Dirichlet boundary conditions~\eqref{eq: Dirichlet}.  
We take $K=1/(2\pi^2)$ so that the smallest eigenvalue of the elliptic
operator~$Au=-\nabla\cdot(K\nabla u)$ is $\lambda_{11}=1$.
We choose the initial data and source term 
\[
u_0=\varphi_{11}
\quad\text{and}\quad
f(t)=(1+\sin\pi t)\varphi_{11},
\]
where $\varphi_{11}(x)=(\sin\pi x_1)(\sin\pi x_2)$ is an eigenfunction 
of~$A$ with eigenvalue~$\lambda_{11}$.  The exact solution 
of~\eqref{eq: integrode} then has the separable 
form~$u(x,t)=u_{11}(t)\varphi_{11}(x)$, and we can
compute the time-dependent factor~$u_{11}(t)$ to high accuracy by
applying Gauss quadrature to an integral 
representation~\cite[Section~6]{McLeanThomee2010}.  Moreover,
the regularity estimates~\eqref{eq: regularity} hold with $\sigma=1$,
so by~\eqref{eq: DG error} the $L_2$-error in~$U^n$ is of 
order~$(k+h^2)\lg(t_n/t_1)$.

Table~\ref{tab: results} shows some results of computations performed using
a single-threaded Fortran code running on a desktop PC with an Intel 
Core-i7~860 processor (2.80GHz) and 8GB of RAM.  In all cases the spatial 
discretization used bilinear finite elements on a uniform 
$80\times80$~rectangular mesh, 
so the number of degrees of freedom was~$M=79^2=6241$.  
We solved the linear system~\eqref{eq: linear system} using fast
sine transforms.  Taking $N=16000$~time steps, we first computed the (slow) 
DG solution~$U$ and then the perturbed (fast) solution~$\Utilde$ 
for $r=4$, 5, 6 choosing $\eta$ as in
Proposition~\ref{prop: eta}.  The table shows the maximum nodal error
$\max_{1\le n\le N, 1\le m\le M}|U^n_m-u(x_m,t_n)|$, and also the CPU times
in seconds, broken down into three parts.  The setup phase covers
computing the $\beta_{nj}$ or $\betatilde_{nj}$, and for the fast method
the cost of constructing the cluster tree and admissible covers.  The
RHS phase covers the computation of the right-hand side 
of~\eqref{eq: linear system}, and the solver phase is the total CPU time
used by the elliptic solver.

The cluster tree was $(Q,G)$-uniform for
$Q=2$~and $G=10$, so there were $2^G=1024$ leaves.  We see from the table
that if $r=5$ then the fast summation algorithm evaluates the right-hand 
side (RHS) in 17.7~seconds, compared to 911~seconds for a direct evaluation,
while maintaining the accuracy of the DG solution.
\appendix
\section{A lower bound}
\label{sec: lower bound}
For any real-valued, piecewise-constant~$V$, 
\[
\sum_{n=1}^Nk_n(\Bbar^n V)V^n=\int_0^T\B V(t)V(t)\,dt
\quad\text{and}\quad
\sum_{n=1}^Nk_n(V^n)^2=\int_0^TV(t)^2\,dt,
\]
so the next theorem shows that \eqref{eq: lower} holds.

\begin{theorem}
If $v:[0,T]\to\R$ is piecewise $C^1$ then
\[
\int_0^T\B v(t) v(t)\,dt\ge\rho_\nu T^{\nu-1}\int_0^Tv(t)^2\,dt,
\]
where
\begin{equation}\label{eq: rho_nu}
\rho_\nu=\pi^{1-\nu}\,\frac{(1-\nu)^{1-\nu}}{(2-\nu)^{2-\nu}}\,
	\sin(\tfrac12\pi\nu).
\end{equation}
\end{theorem}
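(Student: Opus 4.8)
The plan is to reduce to the unit interval by scaling, pass to the Fourier side, and close the estimate with an uncertainty-principle argument that exploits the compact support of~$v$. First I would exploit the homogeneity of~$\omega_\nu$: setting $v(t)=V(t/T)$ one checks that $\B v(t)=T^{\nu-1}(\B V)(t/T)$, so that $\int_0^T(\B v)v\,dt=T^\nu\int_0^1(\B V)V\,d\tau$ while $\int_0^Tv^2\,dt=T\int_0^1V^2\,d\tau$. The factor~$T^{\nu-1}$ thus appears automatically, and it suffices to treat $T=1$; equivalently, I may assume $v$ is supported in~$[0,1]$.

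Next I would extend~$v$ by zero to~$\R$. Since $v$ vanishes for $t>1$, the quadratic form is unchanged, $\int_0^1(\B v)v\,dt=\int_\R(\B v)v\,dt$, and because $v$ is causal the Riemann--Liouville operator $\B=\frac{d}{dt}(\omega_\nu*\,\cdot\,)$ acts on the Fourier side as multiplication by~$(i\xi)^{1-\nu}$. Parseval's identity then gives
\[
\int_\R(\B v)v\,dt=\frac{1}{2\pi}\int_\R\operatorname{Re}\bigl[(i\xi)^{1-\nu}\bigr]|\hat v(\xi)|^2\,d\xi
	=\frac{\sin(\tfrac12\pi\nu)}{2\pi}\int_\R|\xi|^{1-\nu}|\hat v(\xi)|^2\,d\xi,
\]
using $\operatorname{Re}(i\xi)^{1-\nu}=|\xi|^{1-\nu}\cos\bigl(\tfrac12\pi(1-\nu)\bigr)=|\xi|^{1-\nu}\sin(\tfrac12\pi\nu)$. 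The right-hand side is, up to the constant, the square of the seminorm of~$v$ in~$\Hdot^{(1-\nu)/2}$, and I likewise write $\int_0^1v^2\,dt=\frac{1}{2\pi}\int_\R|\hat v|^2\,d\xi$.

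The crux is then a Poincar\'e-type lower bound of $\int_\R|\xi|^{1-\nu}|\hat v|^2$ by $\int_\R|\hat v|^2$ that holds only because $v$ has support of length one. Cauchy--Schwarz over~$[0,1]$ yields the pointwise bound $|\hat v(\xi)|^2=\bigl|\int_0^1v\,e^{-i\xi t}\,dt\bigr|^2\le\|v\|^2$, so for every $R>0$ the low frequencies carry mass at most $\int_{|\xi|<R}|\hat v|^2\le 2R\|v\|^2$, whence $\int_{|\xi|\ge R}|\hat v|^2\ge2(\pi-R)\|v\|^2$. Bounding $|\xi|^{1-\nu}\ge R^{1-\nu}$ on $|\xi|\ge R$ gives $\int_\R|\xi|^{1-\nu}|\hat v|^2\ge2R^{1-\nu}(\pi-R)\|v\|^2$, and maximising $R\mapsto R^{1-\nu}(\pi-R)$ over~$(0,\pi)$ at $R=\pi(1-\nu)/(2-\nu)$ produces exactly $2\pi$ times $\pi^{1-\nu}(1-\nu)^{1-\nu}/(2-\nu)^{2-\nu}$. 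Inserting this into the Parseval identity recovers $\rho_\nu$ precisely as in~\eqref{eq: rho_nu}.

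I expect the only delicate point to be the rigorous justification of the Parseval step for $v$ merely piecewise~$C^1$: since $\B v$ behaves like $v(0)\,\omega_\nu(t)\sim t^{\nu-1}$ near the origin, it need not lie in~$L_2(\R)$ when $\nu\le\tfrac12$, and $(i\xi)^{1-\nu}$ requires a fixed branch. I would handle this by first proving the identity for smooth~$v$ compactly supported in~$(0,1)$, where everything is classical, and then passing to the limit, using that both sides are continuous with respect to the $\Hdot^{(1-\nu)/2}$ seminorm and that such functions are dense; the support-length-one Cauchy--Schwarz bound is stable under this approximation.
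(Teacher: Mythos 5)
Your proof is correct and follows essentially the same route as the paper: scaling to $T=1$, Plancherel with the symbol $(i\xi)^{1-\nu}$ giving the factor $\sin(\tfrac12\pi\nu)$, the Cauchy--Schwarz bound $|\hat v(\xi)|^2\le\|v\|^2$ from unit-length support, and optimisation of the frequency cutoff at $R=\pi(1-\nu)/(2-\nu)$, which is precisely the paper's choice of $\epsilon$. The only cosmetic difference is that you integrate over all of $\R$ with the factor $1/(2\pi)$ while the paper folds onto $(0,\infty)$ with $1/\pi$; your closing remark on justifying Parseval by density for piecewise $C^1$ functions addresses a point the paper passes over silently.
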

\begin{proof}
The assumption that $v$ is piecewise $C^1$ ensures $\B v$ is
continuous except for weak singularities at the breakpoints of~$v$.
Using the substitution~$t=\tau T$ for~$0<\tau<1$, we see that it 
suffices to deal with the case~$T=1$.  
Denote the Laplace transform of~$u$ by
\[
\hat u(z)=\int_0^\infty e^{-zt}u(t)\,dt\quad\text{for $\Re z\ge0$,}
\]
and observe that, because $\hat\omega_\nu(z)=z^{-\nu}$, 
if we extend $v$ by zero outside the interval~$[0,1]$, then
\[
\widehat{\B v}(z)=z\,\hat\omega_\nu(z)\hat v(z)=z^{1-\nu}\hat v(z).
\]
Applying the Plancherel Theorem, and noting that 
$\overline{\hat v(z)}=\hat v(\bar z)$ because $v$ is real-valued, we have
\[
\int_0^\infty u(t)v(t)\,dt=\frac{1}{2\pi}\int_{-\infty}^\infty
	\hat u(iy)\hat v(-iy)\,dy.
\]
In particular, 
\begin{equation}\label{eq: v norm}
\int_0^1 v(t)^2\,dt=\frac{1}{\pi}\int_0^\infty|\hat v(iy)|^2\,dy
\end{equation}
and
\begin{equation}\label{eq: Bv v}
\begin{aligned}
\int_0^1\B v(t)v(t)\,dt&=\int_0^\infty\B v(t)v(t)\,dt
	=\frac{1}{2\pi}\int_{-\infty}^\infty(iy)^{1-\nu}
		\hat v(iy)\hat v(-iy)\,dy\\
	&=\frac{\sin\tfrac12\pi\nu}{\pi}
	\int_0^\infty y^{1-\nu}|\hat v(iy)|^2\,dy.
\end{aligned}
\end{equation}

The estimate
\[
|\hat v(iy)|^2\le\biggl|\int_0^1 e^{-iyt}v(t)\,dt\biggr|^2
	\le\int_0^1 v(t)^2\,dt
\]
implies that, for any~$\epsilon>0$,
\[
\int_0^\epsilon |\hat v(iy)|^2\,dy\le\epsilon\int_0^1v(t)^2\,dt,
\]
and therefore by~\eqref{eq: v norm},
\[
\int_0^1 v(t)^2\,dt\le\frac{\epsilon}{\pi}\int_0^1 v(t)^2\,dt
	+\frac{1}{\pi}\int_\epsilon^\infty|\hat v(iy)|^2\,dy.
\]
Thus, for~$0<\epsilon<\pi$, 
\[
\biggl(1-\frac{\epsilon}{\pi}\biggr)\int_0^1 v(t)^2\,dy
	\le\frac{1}{\pi}\int_\epsilon^\infty
		(y/\epsilon)^{1-\nu}|\hat v(iy)|^2\,dy
	\le\frac{\epsilon^{\nu-1}}{\pi}\int_0^\infty 
		y^{1-\nu}|\hat v(iy)|^2\,dy,
\]
which, in combination with~\eqref{eq: Bv v}, implies that the desired
inequality holds with
\[
\rho_\nu=\epsilon^{1-\nu}(\pi-\epsilon)\,
	\frac{\sin\tfrac12\pi\nu}{\pi}.
\]
The choice~$\epsilon=\pi(1-\nu)/(2-\nu)$ maximises $\rho_\nu$ and gives
the formula~\eqref{eq: rho_nu}.
\end{proof}


\ifpreprint
\section{Computing the weights}
\label{sec: eval}
Since the diagonal weights present no difficulty, we assume throughout
this appendix that $1\le j\le n-1$.  Denoting the distance between the 
centres of $I_j$~and $I_n$ by $\Delta_{nj}=t_{n-1/2}-t_{j-1/2}$, we see 
from~\eqref{eq: beta_nj repn} that
\[
\beta_{nj}=-\int_{-k_j/2}^{k_j/2}\int_{-k_n/2}^{k_n/2}
	\omega_{\nu-1}(\Delta_{nj}+t+s)\,dt\,ds
\]
and so
\begin{equation}\label{eq: beta_nj B}
\beta_{nj}=-\int_{-k_j/2}^{k_j/2}B_{\nu-1}(\Delta_{nj}+s,k_n)\,ds
=-\int_{-k_n/2}^{k_n/2}B_{\nu-1}(\Delta_{nj}+t,k_j)\,dt.
\end{equation}
Although we can easily evaluate these integrals analytically, the
resulting expressions are susceptible to loss of precision when 
$k_j$~and $k_n$ are small compared to~$\Delta_{nj}$.

Consider the problem of computing
\[
B_\mu(t,k)=\omega_{1+\mu}(t+\tfrac12 k)-\omega_{1+\mu}(t-\tfrac12 k)
\]
when $k$ is small compared to~$t$, so that we have a difference of
nearly equal numbers.  The C99 standard library provides the functions
$\expmone(x)$~and $\logonep(x)$ that approximate
$e^x-1$~and $\log(1+x)$ accurately even when~$x$ is close to zero, so
we can avoid the loss of precision by noting that
\begin{equation}\label{eq: B_mu D_mu}
B_\mu(t,k)=\omega_{1+\mu}(t+\tfrac12k)
	D_\mu\biggl(\frac{k}{t+\tfrac12 k}
	\biggr)\quad\text{where $D_\mu(x)=1-(1-x)^\mu$,} 
\end{equation}
and evaluating $D_\mu(x)$ as $-\expmone\bigl(\mu\,\logonep(-x)\bigr)$.
  
However, even though we can compute $B_\mu(t)$ accurately, we still
face the problem that
\begin{equation}\label{eq: beta_nj direct}
\beta_{nj}=B_\nu(\Delta_{nj}-\tfrac12k_j,k_n)
	-B_\nu(\Delta_{nj}+\tfrac12k_j,k_n)
\end{equation}
is again a difference of nearly equal numbers, as is the alternative
formula
\[
\beta_{nj}=B_\nu(\Delta_{nj}-\tfrac12k_n,k_j)
	-B_\nu(\Delta_{nj}+\tfrac12k_n,k_j),
\]
or equivalently
\begin{align*}
\beta_{nj}&=\omega_{1+\nu}(t_n-t_j)D_\nu\biggl(
		\frac{k_n}{t_n-t_j}\biggr)
	 -\omega_{1+\nu}(t_n-t_{j-1})D_\nu\biggl(
	 	\frac{k_n}{t_n-t_{j-1}}\biggr)\\
	&=\omega_{1+\nu}(t_{n-1}-t_{j-1})D_\nu\biggl(
		\frac{k_j}{t_{n-1}-t_{j-1}}\biggr)
	 -\omega_{1+\nu}(t_n-t_{j-1})D_\nu\biggl(
	 	\frac{k_j}{t_n-t_{j-1}}\biggr).
\end{align*}
When~$k_j$ is small compared to~$\Delta_{nj}$, the following sum
gives a more accurate value for the weight~$\beta_{nj}$.

\begin{theorem}
If $1\le j\le n-1$, then there exists $t_{nj}^*\in I_j$
such that
\[
\beta_{nj}=-\sum_{p=0}^{r-1}
	\frac{B_{\nu-2p-1}(\Delta_{nj},k_n)}{(2p+1)!2^{2p}}\,k_j^{2p+1}
-\frac{B_{\nu-2r-1,j}(t_{n-1/2}-t_{nj}^*,k_n)}{(2r+1)!2^{2r}}\,k_j^{2r+1}.
\]
\end{theorem}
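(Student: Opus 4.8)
The plan is to start from the representation~\eqref{eq: beta_nj B}, namely
\[
\beta_{nj}=-\int_{-k_j/2}^{k_j/2}B_{\nu-1}(\Delta_{nj}+s,k_n)\,ds,
\]
and to Taylor expand the integrand in the variable~$s$ about~$s=0$. The essential observation is that differentiation in the first argument lowers the order of~$B$ by one: since $\omega_{1+\mu}'=\omega_\mu$, the formula $B_\mu(t,k)=\omega_{1+\mu}(t+\tfrac12k)-\omega_{1+\mu}(t-\tfrac12k)$ gives $\partial_t B_\mu(t,k)=B_{\mu-1}(t,k)$. Writing $g(s)=B_{\nu-1}(\Delta_{nj}+s,k_n)$, we therefore have $g^{(m)}(s)=B_{\nu-1-m}(\Delta_{nj}+s,k_n)$, and in particular $g^{(m)}(0)=B_{\nu-1-m}(\Delta_{nj},k_n)$.

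First I would expand $g$ to order~$2r-1$ with the Lagrange remainder of order~$2r$,
\[
g(s)=\sum_{m=0}^{2r-1}\frac{g^{(m)}(0)}{m!}\,s^m
	+\frac{g^{(2r)}\bigl(\xi(s)\bigr)}{(2r)!}\,s^{2r},
\]
where $\xi(s)$ lies between~$0$ and~$s$, and then integrate over the symmetric interval $s\in[-k_j/2,k_j/2]$. Every odd power contributes $\int_{-k_j/2}^{k_j/2}s^m\,ds=0$, so only the even indices $m=2p$ with $0\le p\le r-1$ survive, and the elementary computation $\int_{-k_j/2}^{k_j/2}s^{2p}\,ds=k_j^{2p+1}/\bigl[(2p+1)2^{2p}\bigr]$ turns the polynomial part into exactly
\[
\sum_{p=0}^{r-1}\frac{B_{\nu-2p-1}(\Delta_{nj},k_n)}{(2p+1)!\,2^{2p}}\,k_j^{2p+1},
\]
using $\nu-1-2p=\nu-2p-1$. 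The factor of~$-1$ in front of the integral then produces the leading sum in the claimed identity.

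The heart of the argument is converting the remainder $\frac{1}{(2r)!}\int_{-k_j/2}^{k_j/2}g^{(2r)}(\xi(s))\,s^{2r}\,ds$ into a single evaluation. Since $s^{2r}\ge0$ and $\xi(s)\in[-k_j/2,k_j/2]$ for every~$s$, the weighted average $\int g^{(2r)}(\xi(s))s^{2r}\,ds\big/\int s^{2r}\,ds$ lies between the minimum and maximum of $g^{(2r)}$ on $[-k_j/2,k_j/2]$; provided $g^{(2r)}=B_{\nu-1-2r}(\Delta_{nj}+\cdot\,,k_n)$ is continuous there, the intermediate value theorem supplies an~$s^\ast$ with $\frac{1}{(2r)!}\int g^{(2r)}(\xi(s))s^{2r}\,ds=g^{(2r)}(s^\ast)k_j^{2r+1}/\bigl[(2r+1)!\,2^{2r}\bigr]$. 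Setting $t_{nj}^\ast=t_{j-1/2}-s^\ast$ gives $\Delta_{nj}+s^\ast=t_{n-1/2}-t_{nj}^\ast$ and $t_{nj}^\ast\in I_j$, which is precisely the stated remainder term.

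I expect the main obstacle to be the continuity of $g^{(2r)}$ on the closed interval. The argument of the weight $\omega_{\nu-2r}$ appearing in $B_{\nu-1-2r}(\Delta_{nj}+s,k_n)$ stays positive exactly when $\Delta_{nj}+s-\tfrac12k_n>0$, and its smallest value over $s\in[-k_j/2,k_j/2]$ is $t_{n-1}-t_j\ge0$, which is strictly positive precisely when $j\le n-2$. For $j=n-1$ the weight develops an integrable weak singularity at the left endpoint $s=-k_j/2$, and I would dispatch this borderline case separately: either by the integrability of that singularity, which keeps the weighted average finite so that the intermediate value step still delivers an interior~$s^\ast$, or simply by restricting to the regime $k_j\ll\Delta_{nj}$ (so $j<n-1$) for which the expansion is actually intended.
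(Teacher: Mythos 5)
Your proposal is correct and follows essentially the same route as the paper: expand $B_{\nu-1}(\Delta_{nj}+s,k_n)$ in powers of~$s$ about $s=0$, integrate over the symmetric interval $[-\tfrac12 k_j,\tfrac12 k_j]$ so that the odd powers cancel, and collapse the remainder to a single evaluation $B_{\nu-1-2r}(t_{n-1/2}-t_{nj}^*,k_n)$ with $t_{nj}^*\in I_j$ by a mean-value/intermediate-value argument. The differences are only technical --- you use the Lagrange form of the remainder and a single weighted-average argument, whereas the paper uses the integral form of the remainder, applies the integral mean value theorem separately to the contributions from $\pm t$, and then merges the two resulting values via the intermediate value theorem; your explicit flagging of the endpoint singularity in the case $j=n-1$ addresses a point the paper passes over silently.
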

\begin{proof}
We use the first integral representation in~\eqref{eq: beta_nj B}.
The Taylor expansion
\begin{multline*}
B_{\nu-1}(\Delta_{nj}+s,k_n)
	=\sum_{p=0}^{2r-1}B_{\nu-1-p}(\Delta_{nj},k_n)\,\frac{s^p}{p!}\\
	+\int_0^s\frac{(s-t)^{2r-1}}{(2r-1)!}\,
		B_{\nu-1-2r}(\Delta_{nj}+t,k_n)\,dt
\end{multline*}
implies that
\[
\int_{-\tfrac12 k_j}^{\tfrac12 k_j}B_{\nu-1}(\Delta_{nj}+s,k_n)\,ds
	=\sum_{p=0}^{r-1}B_{\nu-1-2p}(\Delta_{nj},k_n)
		\frac{2(\tfrac12 k_j)^{2p+1}}{(2p+1)!}+E_r,
\]
with the error term given by~$E_r=E_r^++E_r^-$, where
\[
E_r^\pm=\int_0^{\tfrac12 k_j}\int_0^s\frac{(s-t)^{2r-1}}{(2r-1)!}
	B_{\nu-1-2r}(\Delta_{nj}\pm t,k_n)\,dt\,ds.
\]
By the Integral Mean Value Theorem, there exists 
$\theta_{nj}^\pm\in[0,\tfrac12 k_j]$ such that
\begin{multline*}
E_r^\pm=B_{\nu-1-2r}(\Delta_{nj}\pm\theta_{nj}^\pm,k_n)\int_0^{\tfrac12 k_j}
	\int_0^s \frac{(s-t)^{2r-1}}{(2r-1)!}\,dt\,ds\\
	=B_{\nu-1-2r}(\Delta_{nj}\pm\theta_{nj}^\pm,k_n)\,
		\frac{(\tfrac12 k_j)^{2r+1}}{(2r+1)!}.
\end{multline*}
Since $\Delta_{nj}\pm\theta_{nj}^\pm=t_{n-1/2}-(t_{j-1/2}\mp\theta_{nj}^\pm)$
and $t_{j-1/2}\mp\theta_{nj}^\pm\in I_j$, by the Intermediate Value Theorem 
there exists $t_{nj}^*\in I_j$ such that
\[
B_{\nu-1-2r}(\Delta_{nj}+\theta_{nj}^+,k_n)
	+B_{\nu-1-2r}(\Delta_{nj}-\theta_{nj}^-,k_n)
	=2B_{\nu-1-2r}(t_{n-1/2}-t_{nj}^*,k_n).
\]
\end{proof}

By starting from the \emph{second} integral representation 
in~\eqref{eq: beta_nj B}, we obtain an expansion in odd powers
of~$k_n$, instead of~$k_j$.  For practical meshes, we generally have
$k_j\le k_n$, so the series in the theorem will be preferable.

To determine the speed of convergence of the series, denote the $p$th
term by
\[
b_p=-\frac{B_{\nu-2p-1}(\Delta_{nj},k_n)}{(2p+1)!2^{2p}}\,k_j^{2p+1},
\]
and note that since $D_{-\mu}(x)=-D_\mu(x)/(1-x)^\mu$,
\[
-B_{\nu-2p-1}(\Delta_{nj},k_n)=\omega_{\nu-2p}(\Delta_{nj}+\tfrac12 k_n)
	\frac{D_{2p+1-\nu}(x)}{(1-x)^{2p+1-\nu}}
	\quad\text{for $x=\frac{k_n}{\Delta_{nj}+\tfrac12 k_n}$.}
\]
We find that the ratio of successive terms is
\[
\frac{b_{p+1}}{b_p}=\frac{1}{4}\,\frac{2p+2-\nu}{2p+3}\,
	\frac{2p+1-\nu}{2p+2}\,\frac{D_{2p+3-\nu}(x)}{D_{2p+1-\nu}(x)}
	\biggl(\frac{k_j}{\Delta_{nj}-\tfrac12k_n}\biggr)^2,
\]
and, since $D_\mu(x)\to1$ as $\mu\to\infty$ for $0<x<1$, 
\[
\lim_{p\to\infty}\frac{b_{p+1}}{b_p}
	=\biggl(\frac{k_j}{2\Delta_{nj}-k_n}\biggr)^2.
\]
For instance, in the case of a uniform grid $t_n=nk$, this limiting 
ratio is $[2(n-j)-1]^{-2}$, giving acceptable convergence for $j\le n-2$.
If $j=n-1$, then the limiting ratio is $k_{n-1}/(k_{n-1}+k_n)$,
so the convergence is relatively slow.  We see 
from~\eqref{eq: beta_nj direct} that
\begin{align*}
\beta_{n,n-1}&=\omega_{1+\nu}(k_n)+\omega_{1+\nu}(k_{n-1})
	-\omega_{1+\nu}(k_n+k_{n-1})\\
	&=\omega_{1+\nu}(k_n)\bigl[1+x^\nu-(1+x)^\nu\bigr],
	\quad\text{for $x=k_{n-1}/k_n$,}
\end{align*}
and from symmetry we may assume $k_{n-1}\le k_n$ and thus $0<x\le1$.
To evaluate the difference in square brackets, we write
\[
(1+x)^\nu=1+y^\nu\quad\text{where}\quad y=\exp\biggl(
	\frac{\log[(1+x)^\nu-1]}{\nu}\biggr)
\]
computing $(1+x)^\nu-1$ as $\expmone\bigl(\nu\,\logonep(x)\bigr)$.
In this way,
\[
1+x^\nu-(1+x)^\nu=x^\nu-y^\nu=y^\nu\bigl[(x/y)^\nu-1\bigr],
\]
and we compute $(x/y)^\nu-1$ as $\expmone[\nu\log(x/y)]$.

We remark that in the special case~$\nu=1/2$ one can evaluate $\beta_{nj}$
more easily.  Firstly,
\[
B_{1/2}(t,k)=\frac{1}{\Gamma(3/2)}\,\frac{k}%
{\sqrt{t+\tfrac12k}+\sqrt{t-\tfrac12k}},
\]
and if we write 
$R_{*\diamond}=\sqrt{\Delta_{nj}*\tfrac12k_j\diamond\tfrac12k_n}$
for $*$, $\diamond\in\{+,-\}$, then
\begin{align*}
\beta_{nj}&=B_{1/2}(\Delta_{nj}-\tfrac12k_j,k_n)
	-B_{1/2}(\Delta_{nj}+\tfrac12k_j,k_n)\\
	&=\frac{k_n}{\Gamma(3/2)}\,\biggl(
	\frac{1}{R_{-+}+R_{--}}-\frac{1}{R_{++}+R_{+-}}\biggr)\\
	&=\frac{k_n}{\Gamma(3/2)}\,
\frac{(R_{++}-R_{-+})+(R_{+-}-R_{--})}{(R_{-+}+R_{--})(R_{++}+R_{+-})}\\
	&=\frac{k_nk_j}{\Gamma(3/2)}\,\frac{1}{
	(R_{-+}+R_{--})(R_{++}+R_{+-})}\biggl(
	\frac{1}{R_{++}+R_{-+}}+\frac{1}{R_{+-}+R_{--}}\biggr).
\end{align*}
\section{Computing the Taylor approximations}
\label{sec: computing low}
Recall from Theorem~\ref{thm: Taylor expansion} that
\[
\phi_{pn}=(-1)^pB_{\nu-p}(t_{n-1/2}-\smid,k_n)
\]
so \eqref{eq: B_mu D_mu} gives
\[
\phi_{pn}=(-1)^p\omega_{\nu-p+1}(t_n-\smid)D_{\nu-p}(x)
	\quad\text{where $x=\dfrac{k_n}{t_n-\smid}$.}
\]
Since $D_{\nu-p}(x)=-(1-x)^{\nu-p}D_{p-\nu}(x)$
and $1-x=(t_{n-1}-\smid)/(t_n-\smid)$, we have
\[
\phi_{pn}=\kappa_{pn}D_{p-\nu}(x),\quad\text{where}\quad
	\kappa_{pn}=(-1)^{p+1}
	\frac{(t_{n-1}-\smid)^{\nu-p}}{\Gamma(\nu-p+1)},
\]
and the $\kappa_{pn}$ can be computed via the recursion
\[
\kappa_{1n}=\frac{(t_{n-1}-\smid)^{\nu-1}}{\Gamma(\nu)}
\quad\text{and}\quad
\kappa_{p+1,n}=\frac{p-\nu}{t_{n-1}-\smid}\,\kappa_{p,n}
	\quad\text{for $1\le p\le r-1$.}
\]
Likewise,
\begin{align*}
\psi_{pj}&=B_p(t_{j-1/2}-\smid,k_j)=\frac{1}{p!}\bigl[
	(t_{j-1/2}-\smid+\tfrac12 k_j)^p-
	(t_{j-1/2}-\smid-\tfrac12 k_j)^p\bigr]\\
	&=\frac{k_j}{p!}\sum_{q=0}^{p-1} (t_j-\smid)^q
	(t_{j-1}-\smid)^{p-1-q},
\end{align*}
giving $\psi_{1n}=k_j$ and
\[
\psi_{p+1,j}=\frac{1}{p+1}\biggl((t_{j-1}-\smid)\psi_{pj}
	+\frac{k_j}{p!}\,(t_j-\smid)^p\biggr)
	\quad\text{for $1\le p\le r-1$.}
\]

\end{document}